\def\bfeta{\boldsymbol{\eta}}
\def\bxi{\boldsymbol{\xi}}
\def\int{\intop}
\def\sR{{\mathbb{R}}}
\def\div{\hbox{\rm div}\,}
\newtheorem{theo}{\bf Theorem}
\newtheorem{lem}{\bf Lemma}
\newtheorem{remark}{Remark}
\newtheorem{defi}{\bf Definition}
\newcommand{\meas}{\mathop{\rm meas}}
\newcommand{\supp}{\mathop{\rm supp}}
\renewcommand{\div}{\mathop{\rm div}}
\newcommand{\F}{\mathbb{F}}
\newcommand{\nn}{\mathbf{n}}
\title{On Nonhomogeneous Boundary Value Problems for the Stationary  Navier--Stokes Equations in 2D Symmetric Semi-Infinite Outlets}
\author[b]{M. Chipot}
\author[a,b]{K. Kaulakyt\.{e}}
\author[a]{K. Pileckas}
\author[b]{W. Xue}
\affil[a]{Faculty of Mathematics and Informatics, Vilnius University\\
Naugarduko str. 24, LT-03225 Vilnius, Lithuania}
\affil[b]{Institute of Mathematics, University of Zurich\\
Winterthurerstrasse 190, CH-8057 Zurich, Switzerland}  
\date{ }
\begin{document}
\maketitle
\begin{abstract}
We study the stationary nonhomogeneous Navier--Stokes problem in a two dimensional symmetric domain with a semi-infinite outlet (for instance, either parabo-\\loidal or channel-like). Under the symmetry assumptions on the domain, boundary value and external force we prove the existence of at least one weak symmetric solution without any restriction on the size of the fluxes, i.e. the fluxes of the boundary value ${\bf a}$ over the inner and the outer boundaries may be arbitrarily large. Only the necessary compatibility condition (the total flux is equal to zero) has to be satisfied. Moreover, the Dirichlet integral of the solution can be finite or infinite depending on the geometry of the domain.
\\{\bf Keywords:} stationary Navier--Stokes equations; nonhomogeneous boundary value problem;
nonzero flux; 2-dimensional noncompact domains, symmetry.
\\{\bf AMS Subject Classification:} 35Q30; 35J65; 76D03; 76D05.
\end{abstract}

\setcounter{equation}{0}
\section{Introduction}

In this paper we study the steady Navier--Stokes equations with nonhomogeneous boundary conditions
\begin{equation}\label{prad0}\left\{\begin{array}{rcl}
-\nu \Delta{\bf u}+\big({\bf u}\cdot \nabla\big){\bf u} +\nabla p &
= & {\bf f}\qquad \hbox{\rm in }\;\;\Omega,\\
\div\,{\bf u} &  = & 0  \qquad \hbox{\rm in }\;\;\Omega,\\
 {\bf u} &  = & {\bf a} \qquad \hbox{\rm on }\;\;\partial\Omega
 \end{array}\right\}
 \end{equation}
in a two dimensional symmetric\footnote{For the definition of a symmetric domain see \eqref{sd}.} multiply connected domain $\Omega,$ having one outlet to infinity (paraboloidal or channel-like), where the vector-valued function
${\bf u}={\bf u}(x)$ is the unknown velocity field, the scalar
function $p=p(x)$ is the pressure of
the fluid, while the vector-valued functions ${\bf a}={\bf a}(x)$ and ${\bf f}={\bf f}(x)$ denote the given boundary value and the external force; $\nu>0$ is the viscosity constant of the given fluid. The boundary $\partial\Omega$ consists of an infinite connected outer boundary and finitely many connected components, forming the inner boundary. The fluxes of the boundary value ${\bf a}$ over each component of the inner boundaries and over the outer boundary may be arbitrarily large.

Let us consider firstly the steady Navier--Stokes problem \eqref{prad0} in a bounded domain $\Omega$
with multiply connected Lipschitz boundary $\partial\Omega$ consisting of $N$ disjoint components $\Gamma_j, j=1,...,N.$ The continuity equation $(\ref{prad0}_2)$ implies
the necessary compatibility condition for the solvability of the problem
\eqref{prad0}:
\begin{equation}\label{nc}\begin{array}{l}
\int\limits_{\partial\Omega}{\bf a}\cdot{\bf
n}\,dS=\sum\limits_{j=1}^N\int\limits_{\Gamma_j}{\bf a}\cdot{\bf
n}\,dS=0,
\end{array}
\end{equation}
where ${\bf n}$ is the unit vector of the outward normal to $\partial\Omega$. This condition means that the total flux is zero. 
Starting from the famous paper of J. Leray published in 1933 (see \cite{Leray}) the problem \eqref{prad0} has been extensively studied. Nevertheless, for a long time the
existence of a weak solution ${\bf u}\in W^{1,2}(\Omega)$  to
problem \eqref{prad0} was only proved either under the  condition of zero fluxes
\begin{equation}\label{3}\begin{array}{l}
{\F}_j=\int\limits_{\Gamma_j}{\bf a}\cdot{\bf n}\,dS=0,\qquad
j=1,2,\ldots,N,
\end{array}
\end{equation}
(e.g., \cite{Leray}, \cite{Lad}, \cite{Lad1},
\cite{VorJud}), or assuming the fluxes $\F_j$ to be sufficiently
small (e.g., \cite{BOPI}, \cite{Finn}, \cite{Fu}, \cite{Galdi1},
\cite{Kozono}),  or under certain symmetry assumptions on the domain
$\Omega$ and the boundary value ${\bf a}$ (e.g., {\cite{Amick},
\cite{Fu1}, \cite{FM}, \cite{Morimoto}, \cite{Pukhnachev},
\cite{Pukhnachev1}, \cite{Sazonov}, \cite{KPR1}). We call \eqref{nc} the general outflow condition and \eqref{3} - the stringent outflow condition.
However, the fundamental question (formulated by J. Leray in
\cite{Leray}) whether problem \eqref{prad0} is solvable
only under the necessary compatibility condition \eqref{nc} (this so called {\it Leray's problem}) had been open for 80 years. However a huge progress has been made recently. The Leray problem was solved for 2-dimensional bounded multiply connected domain (see \cite{KPR}, \cite{KPR0}, \cite{KPR5}).

Nevertheless, not much is known about the
nonhomogeneous boundary value problem \eqref{prad0} in unbounded domains. The first time in 1999 S.A. Nazarov and K. Pileckas  solved problem \eqref{prad0} in an infinite layer without the smallness assumption on the flux of the boundary value, i.e. on the bottom of the layer there is a compactly supported sink or source of an arbitrary intensity (see \cite{NazPil3}). Later in 2010 J. Neustupa \cite{Neustupa1},
\cite{Neustupa} studied problem \eqref{prad0} in unbounded
domains $\Omega$ with multiply connected boundaries under the ``smallness" assumption of the fluxes of ${\bf a}$ over bounded components of the boundary (he did not impose any conditions on fluxes over
infinite parts of $\partial\Omega.$). However,
the solutions found in \cite{Neustupa1},
\cite{Neustupa} have finite Dirichlet integrals (notice that the a priori estimate of solutions was obtained by a contradiction argument).
Recently, problem \eqref{prad0} has been studied in a class of
domains $\Omega\subset {\mathbb{R}}^{n}, \ n=2,3,$ having paraboloidal and layer type 
outlets to infinity (see \cite{KAU}, \cite{KP}). In \cite{KAU}, \cite{KP} it is assumed that the
fluxes of ${\bf a}$ over the bounded connected components of the inner boundary
are sufficiently small while there are no
restrictions on the fluxes of boundary value ${\bf a}$ over noncompact
connected components of the outer boundary. Under these conditions the existence of at least one weak solution to problem
\eqref{prad0} was proved. This solution can have
either finite or infinite Dirichlet integral depending on geometrical
properties of the outlets. The proofs in \cite{KAU}, \cite{KP} are based on a
special construction of the extension of the boundary
value ${\bf a}$ which satisfies Leray--Hopf's inequality and allows
to get effective estimate of the solution.

H. Fujita and H. Morimoto
(see \cite{Morimoto1}--\cite{Morimoto4}) have solved problem \eqref{prad0} in a symmetric two dimensional multiply connected domains $\Omega$ with
channel-like  outlets to infinity containing a finite number of
``holes". Under certain symmetry assumptions on domain, boundary value and external force, in \cite{Morimoto1}--\cite{Morimoto4} the authors also assumed that the boundary value ${\bf a}$ is equal to zero on the
outer boundary and that in each outlet the flow tends to a Poiseuille flow. Moreover, the fluxes over the boundary of each ``hole" may be arbitrarily large, but the sum of them has to be equal to the flux of the corresponding Poiseuille flow which needs to be sufficiently small. In addition the viscosity of the fluid has to be relatively large.

In this paper we prove the existence of at least one weak symmetric solution to problem \eqref{prad0} in a symmetric domain $\Omega\subset {\mathbb{R}}^{2}$ with either a paraboloidal or a channel-like outlet to infinity assuming that the boundary value ${\bf a}$ and the external force $\bf{f}$ are symmetric functions.
Notice that we do not impose any restrictions on the size of the fluxes over both the inner and the outer boundaries.

\setcounter{lem}{0}
\setcounter{equation}{0}
\section{Main Notation and Auxiliary Results}

Vector valued functions are denoted by bold letters while function spaces for scalar and vector valued functions are denoted the same way.  

Let $\Omega$ be a domain in ${\mathbb{R}}^n$.
$C^{\infty}(\Omega)$ denotes the set of all infinitely
differentiable functions defined  on $\Omega$ and
$C_0^{\infty}(\Omega)$ is the subset of all functions from
$C^{\infty}(\Omega)$ with compact support in $\Omega$. For given
nonnegative  integers $k$ and $q > 1$, $L^q(\Omega)$ and $W^{k,
q}(\Omega)$ denote the usual Lebesgue and Sobolev spaces; $W^{k-1/q,q}(\partial \Omega)$ is the trace space on
$\partial\Omega$ of functions from $W^{k, q}(\Omega)$;
${W}^{k,q}_0(\Omega)$ is the closure of $C_{0}^{\infty}(\Omega)$ with respect to the
norm of $W^{k, q}(\Omega)$; if $\Omega$ is an unbounded domain, we write $u\in W^{k,q}_{
loc}(\overline{\Omega})$ if $u\in W^{k,q}(\Omega\cap B_R(0))$ for any $B_R(0)=\{x\in\sR^2:|x|\leq R\}.$

Let $D(\Omega)$ be the Hilbert space of vector valued functions formed as
the closure of $C_0^{\infty}(\Omega)$ with respect to the Dirichlet norm
$\|\textbf{u}\|_{ D(\Omega)}=\|\nabla\textbf{u}\|_{ L^2(\Omega)}$
induced by the scalar product
$$\begin{array}{l}
(\textbf{u},\textbf{v})=\int\limits_{\Omega}\nabla \textbf{u} :
\nabla\textbf{v} \ dx,
\end{array}
$$
where $\nabla\textbf{u} : \nabla\textbf{v}=\sum\limits^{n}_{j=1}
\nabla u_{j} \cdot \nabla v_{j}=\sum\limits_{j=1}^{n}
\sum\limits_{k=1}^{n} \dfrac{\partial u_{j}}{\partial x_{k}}
\dfrac{\partial v_{j}}{\partial x_{k}}.$ Denote  by
$J_0^{\infty}(\Omega)$  the set of all solenoidal ($\div {\bf u}=0$)
vector fields ${\bf u}$ from $C_0^{\infty}(\Omega)$. By $H(\Omega)$ we indicate the
space formed as the closure of $J_0^{\infty}(\Omega)$ with respect to the
Dirichlet norm. For any bounded domain $\Omega,$ $H^*(\Omega)$ denotes the dual of $H(\Omega).$
$|| \ \ ||_{H^*(\Omega)}$ denotes the strong dual norm in $H^*(\Omega).$

Assume that $\Omega$ is symmetric with respect to the $x_1$-axis, i.e.,
\begin{equation}\label{sd}
(x_1,x_2)\in\Omega \Leftrightarrow (x_1,-x_2)\in\Omega.
\end{equation}
The vector function ${\bf u}=(u_1, u_2)$ is called symmetric with respect to the $x_1$-axis if $u_1$ is an even function of $x_2$ and $u_2$ is an odd function of $x_2,$ i.e.
\begin{equation}\label{ss}
u_1(x_1,x_2)=u_1(x_1,-x_2), \ \ \ \ u_2(x_1,x_2)=-u_2(x_1,-x_2).
\end{equation}
\\For any set of functions $V(\Omega)$ defined in the symmetric domain $\Omega$ satisfying \eqref{sd}, we denote by $V_S(\Omega)$ the subspace of symmetric functions from $V(\Omega)$ satisfying \eqref{ss}.

Below we use the well known results which are formulated in the following two lemmas.

\begin{lem}\label{hardy}(see \cite{Lad})
Let $\Pi\subset\sR^2$ be a bounded domain with Lipschitz boundary $\partial\Pi.$ Then for any ${\bf w}\in W^{1,2}(\Pi)$ with ${\bf w}\big|_{\mathcal{L}}=0,$ $\mathcal{L}\subseteq\partial\Pi,$ $\meas (\mathcal{L})>0,$ the following inequality
\begin{equation}\label{hardy1}\begin{array}{l}
\int\limits_{\Pi}\dfrac{|{\bf w}|^2\,dx}{{\rm dist}^2(x,\mathcal{L})}\leq c\int\limits_{\Pi}|\nabla {\bf w}|^2\,dx
\end{array}
\end{equation}
holds.
\end{lem}

\begin{lem}\label{extension}(see \cite{Lad})
Let $\Pi\subset\sR^2$ be a bounded domain with Lipschitz boundary $\partial\Pi,$ $\mathcal{L}\subseteq\partial\Pi,$ $\meas (\mathcal{L})>0$ and ${\bf h}\in W^{1/2,2}(\partial\Pi)$ satisfying the conditions $\int\limits_{\mathcal{L}} {\bf h}\cdot {\bf n}\,dS=0,$ ${\rm supp}\,{\bf h}\subseteq \mathcal{L}.$ Then ${\bf h}$ can be extended inside $\Pi$ in the form
\begin{equation}\label{extension1}\begin{array}{l}
{\bf A}_*(x,\varepsilon)=\bigg(-\dfrac{\partial(\chi(x,\varepsilon)\, {\bf E}(x))}{\partial x_2}, \dfrac{\partial(\chi(x,\varepsilon)\, {\bf E}(x))}{\partial x_1}\bigg),
\end{array}
\end{equation}
where ${\bf E}\in W^{2,2}(\Pi),$ $ \bigg(-\dfrac{\partial {\bf E}(x)}{\partial x_2}, \dfrac{\partial{\bf E}(x)}{\partial x_1}\bigg)\bigg|_{\partial\Pi}={\bf h}$ and $\chi$ is a Hopf's type cut-off function, i.e. $\chi(x,\varepsilon)$ is smooth, $\chi(x,\varepsilon)=1$ on $\mathcal{L},$ ${\rm supp}\,\chi$ is contained in a small neighborhood of $\mathcal{L}$ and 
$$
|\nabla\chi(x,\varepsilon)|\leq \dfrac{\varepsilon\,c}{{\rm dist}(x,\mathcal{L})}.
$$
The constant $c$ is independent of $\varepsilon>0.$
\end{lem}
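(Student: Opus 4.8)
The strategy is to realize the extension as the plane curl of a cut-off stream function, so the heart of the matter is to produce a scalar stream function ${\bf E}\in W^{2,2}(\Pi)$ with $\nabla^{\perp}{\bf E}\big|_{\partial\Pi}={\bf h}$, where $\nabla^{\perp}{\bf E}=\big(-\tfrac{\partial{\bf E}}{\partial x_{2}},\tfrac{\partial{\bf E}}{\partial x_{1}}\big)$. Once such an ${\bf E}$ and a Hopf cut-off $\chi(\cdot,\varepsilon)$ are available, the field ${\bf A}_{*}=\nabla^{\perp}(\chi{\bf E})$ is precisely of the form \eqref{extension1}, and it is automatically solenoidal, since $\div\,\nabla^{\perp}(\chi{\bf E})=-\tfrac{\partial^{2}(\chi{\bf E})}{\partial x_{1}\partial x_{2}}+\tfrac{\partial^{2}(\chi{\bf E})}{\partial x_{2}\partial x_{1}}=0$.

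To construct ${\bf E}$ I would first extend ${\bf h}$ to a solenoidal field ${\bf b}\in W^{1,2}(\Pi)$ with ${\bf b}\big|_{\partial\Pi}={\bf h}$: take any $W^{1,2}(\Pi)$-extension $\widetilde{{\bf b}}$ of ${\bf h}$ and correct it by ${\bf w}\in W^{1,2}_{0}(\Pi)$ solving $\div\,{\bf w}=-\div\,\widetilde{{\bf b}}$ (the classical solvability of the divergence equation, see \cite{Lad}), which is legitimate because $\int_{\Pi}\div\,\widetilde{{\bf b}}\,dx=\int_{\partial\Pi}{\bf h}\cdot{\bf n}\,dS=\int_{\mathcal{L}}{\bf h}\cdot{\bf n}\,dS=0$; then ${\bf b}=\widetilde{{\bf b}}+{\bf w}$ works. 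Since ${\bf b}$ is solenoidal in a planar domain it is $\nabla^{\perp}$ of a stream function ${\bf E}$; the hypotheses $\supp\,{\bf h}\subseteq\mathcal{L}$ and $\int_{\mathcal{L}}{\bf h}\cdot{\bf n}\,dS=0$ make the flux of ${\bf b}$ over each connected component of $\partial\Pi$ vanish, so ${\bf E}$ is single valued, and ${\bf b}\in W^{1,2}(\Pi)$ yields ${\bf E}\in W^{2,2}(\Pi)$ with $\nabla^{\perp}{\bf E}\big|_{\partial\Pi}={\bf b}\big|_{\partial\Pi}={\bf h}$. The point that requires care is the behaviour of ${\bf E}$ on $\partial\Pi\setminus\mathcal{L}$: there $\nabla^{\perp}{\bf E}={\bf h}=0$, hence $\nabla{\bf E}=0$ and ${\bf E}$ is locally constant, and, normalizing the free additive constant of ${\bf E}$, I may assume ${\bf E}\equiv 0$ on $\partial\Pi\setminus\mathcal{L}$ (the condition $\int_{\mathcal{L}}{\bf h}\cdot{\bf n}\,dS=0$ is what makes this choice consistent, e.g. when $\mathcal{L}$ is connected, as in the applications); it will then be enough to keep $\supp\chi$ away from the components of $\partial\Pi$ that do not meet $\mathcal{L}$.

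For the cut-off I would use the classical logarithmic construction. Fix $\theta\in C^{\infty}(\R)$ with $\theta\equiv 0$ on $(-\infty,0]$ and $\theta\equiv 1$ on $[1,+\infty)$, fix $\delta_{0}>0$ so small that $\{x\in\Pi:\dist(x,\mathcal{L})<\delta_{0}\}$ meets $\partial\Pi$ only along the component containing $\mathcal{L}$, and set
\begin{equation*}
\chi(x,\varepsilon)=\theta\left(\varepsilon\,\ln\frac{\delta_{0}}{\dist(x,\mathcal{L})}\right).
\end{equation*}
Then (after a harmless mollification of $\dist(\cdot,\mathcal{L})$ away from $\mathcal{L}$) $\chi$ is smooth, $\chi\equiv 1$ on the neighbourhood $\{\dist(x,\mathcal{L})\le\delta_{0}e^{-1/\varepsilon}\}$ of $\mathcal{L}$ — in particular $\chi=1$ on $\mathcal{L}$ — , $\chi\equiv 0$ where $\dist(x,\mathcal{L})\ge\delta_{0}$, so $\supp\chi$ lies in a small neighbourhood of $\mathcal{L}$, and differentiating and using $|\nabla\dist(\cdot,\mathcal{L})|=1$ a.e. gives
\begin{equation*}
|\nabla\chi(x,\varepsilon)|\le\frac{c\,\varepsilon}{\dist(x,\mathcal{L})},\qquad c=\sup_{\R}|\theta'|,
\end{equation*}
with $c$ independent of $\varepsilon$, as required.

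Finally, with ${\bf A}_{*}=\nabla^{\perp}(\chi{\bf E})=\chi\,\nabla^{\perp}{\bf E}+{\bf E}\,\nabla^{\perp}\chi$ in hand, one checks the boundary values: on $\mathcal{L}$ the function $\chi$ is identically $1$ on a whole neighbourhood, so $\nabla\chi=0$ there and ${\bf A}_{*}=\nabla^{\perp}{\bf E}={\bf h}$; on $\partial\Pi\setminus\mathcal{L}$ either $\chi\equiv 0$ near that part of the boundary or ${\bf E}=0$ and $\nabla{\bf E}=0$ there, so in either case ${\bf A}_{*}=0={\bf h}$; hence ${\bf A}_{*}\big|_{\partial\Pi}={\bf h}$. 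I expect the only genuinely delicate step to be the construction of the stream function ${\bf E}$ with the correct vanishing behaviour off $\mathcal{L}$ — exactly where the compatibility condition $\int_{\mathcal{L}}{\bf h}\cdot{\bf n}\,dS=0$ is used; the logarithmic cut-off, though it is the conceptual core of Leray's and Hopf's method, is routine once one knows to scale the transition region logarithmically in $\dist(x,\mathcal{L})$, and its gradient bound is what makes this extension effective — via Lemma~\ref{hardy} — for estimating the nonlinear term later.
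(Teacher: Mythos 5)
The paper does not prove this lemma at all --- it is quoted from \cite{Lad} --- and your argument is exactly the standard Hopf--Ladyzhenskaya construction being invoked there: a solenoidal $W^{1,2}$ extension produced via the divergence equation, its stream function ${\bf E}\in W^{2,2}(\Pi)$, and the logarithmic cut-off $\chi(x,\varepsilon)=\theta\big(\varepsilon\ln(\delta_0/\dist(x,\mathcal{L}))\big)$ with $|\nabla\chi|\leq c\,\varepsilon/\dist(x,\mathcal{L})$; this is correct. The only point worth sharpening is the one you already flag: single-valuedness of ${\bf E}$ and the normalization ${\bf E}=0$ on $\partial\Pi\setminus\mathcal{L}$ (needed so that the term ${\bf E}\,\nabla^{\perp}\chi$ does not spoil the boundary values) require the flux of ${\bf h}$ to vanish on \emph{each} connected component of $\partial\Pi$ meeting $\mathcal{L}$, not merely the total $\int_{\mathcal{L}}{\bf h}\cdot{\bf n}\,dS=0$ as the lemma literally states --- in the paper's applications $\mathcal{L}$ is a union of entire components $\Gamma_i$ each carrying zero flux (see \eqref{zero} and \eqref{inff}), so the construction goes through.
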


\setcounter{lem}{0}
\setcounter{equation}{0}
\section{Formulation of the Problem}

We study problem \eqref{prad0}
in a symmetric domain $\Omega\subset\sR^2$ having one outlet to infinity. Denote by $D^{(out)}$ the set
\begin{equation*}\begin{array}{l}
D^{(out)}=\{x\in\mathbb{R}^2:|x_2|\leq g(x_1), \ \ x_1>R_*>0 \},
\end{array}\end{equation*}
where $g=g(x_1)$ is a positive smooth function such that 
$g',$ $g\,g''$ are bounded on the interval $[R_*,\,+\infty)$
and satisfies the Lipschitz condition
$$
|g(t_1)-g(t_2)|\leq L\,|t_1-t_2|, \ \ \ t_1,t_2\geq R_*
$$
with the Lipschitz constant $L.$  
\\We call this set ``an outlet to infinity". Depending on the function $g$ the outlet $D^{(out)}$ can be paraboloidal type or channel-like outlet ($D^{(out)}$ is a channel-like outlet if $g(x_1)=const$).
\\Let us take a small positive number $\gamma$ and introduce another outlet
\begin{equation*}\begin{array}{l}
D^{(in)}=\{x\in D^{(out)}:|x_2|\leq\frac{\gamma}{\gamma+1}\,g(x_1), \ \ x_1>R_*>0 \}.
\end{array}\end{equation*}
\\We consider an unbounded symmetric domain 
$$\Omega = \Omega_0\cup D,$$
where $\Omega_0 = \Omega \cap B_{R_0} (0)$ is the bounded part of the domain $\Omega$ and the unbounded part $D$ is such that (see Fig.\,1) $$D^{(in)}\subset D\subset D^{(out)}.$$ 
\\We assume that

(i) the bounded domain $\Omega_0$ has the form
\begin{equation*}
\begin{array}{l}
\Omega_0=G_0\setminus \bigcup\limits_{i=1}^N \overline{G}_i,
\end{array}
\end{equation*}
where $G_0$ and $G_i,\ \ i=1,...,N,$ are
bounded simply connected domains such that $\overline{G}_i \subset
G_0$ and $G_N$ denotes the nearest ``hole" to the outlet. Denote $\partial G_i=\Gamma_i, \ i=1,...,N;$ 

(ii) the boundary $\partial\Omega$ is composed of the bounded connected components $\Gamma_i, \ i=1,..., N,$ and the infinite component
$\Gamma^*_0=\partial\Omega\setminus\bigcup\limits_{i=1}^N \Gamma_i.$ $\Gamma^*_0$ can be regarded as the outer boundary of $\Omega,$ while $\Gamma_i, \ \ i=1,...,N,$ as the inner boundaries. Denote $\Gamma_0=\Gamma^*_0\cap\partial\Omega_0.$ We suppose that each $\Gamma_i, \ i=0,...,N,$ intersects the $x_1$ axis.
\\Below we will use the following notation:
\begin{equation*}\begin{array}{l}
R_{l+1}=R_l+\dfrac{g(R_l)}{2L}, \ \ l\geq 0,\\
\\
D_l=\{x\in D: x_1<R_l \}, \ \ \Omega_l=\Omega_0\cup D_l, \ \ \ \omega_l=\Omega_{l+1}\setminus\overline{\Omega}_l.
\end{array}
\end{equation*}
\begin{figure}[ht!]
\centering
\includegraphics[scale=0.8]{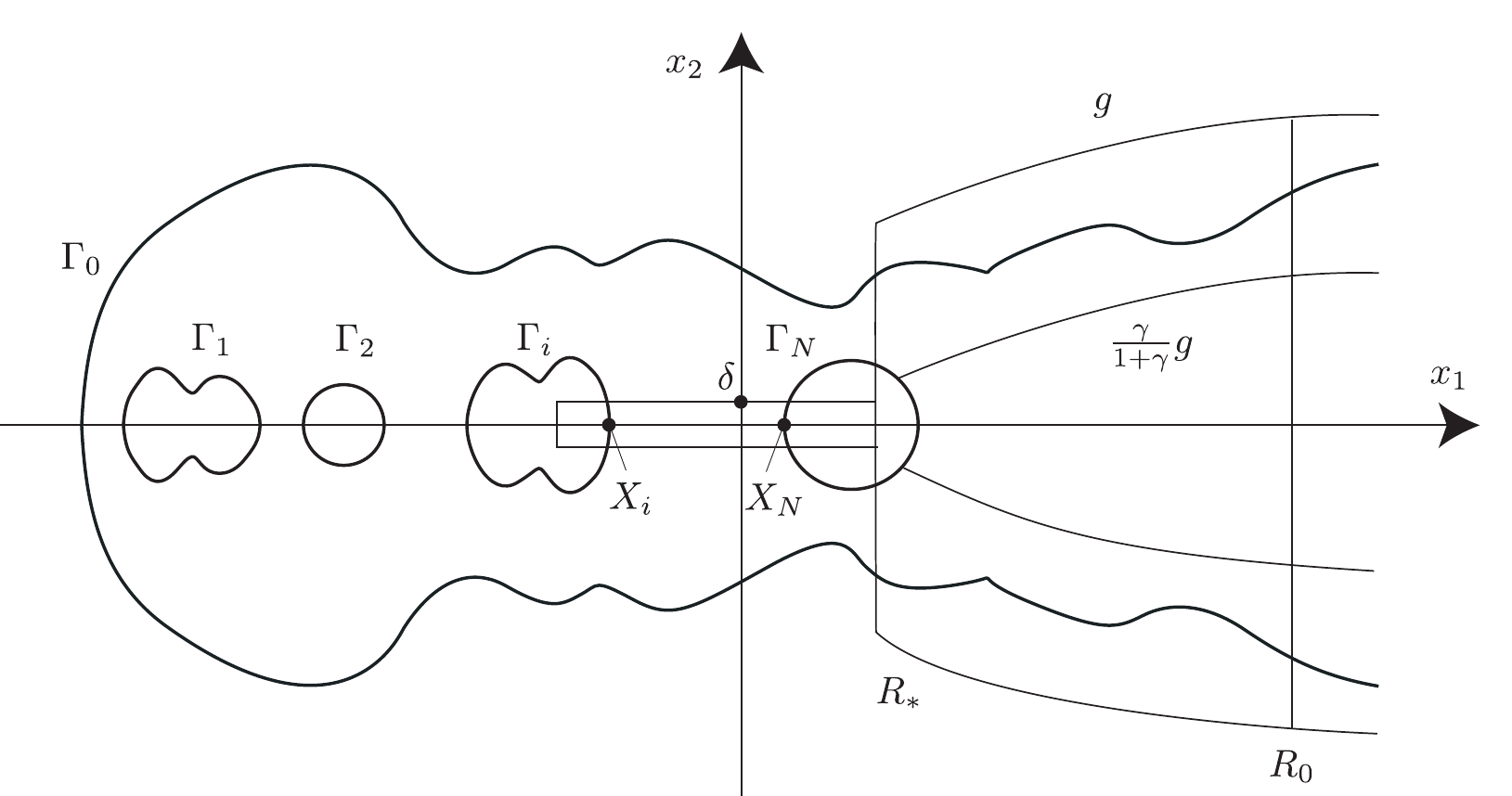}
\caption{The domain $\Omega$}
\end{figure}
\begin{remark}\label{remark1}
{\rm There holds the relation
\begin{equation}\label{g5}\begin{array}{l}
\dfrac{1}{2}g(R_k)\leq g(t)\leq \dfrac{3}{2}g(R_k), \ \ t\in [R_k, \ R_{k+1}].
\end{array}\end{equation}
Indeed, for $t\in [R_k, \ R_{k+1}]$ one has
\begin{equation*}\begin{array}{l}
-\dfrac{g(R_k)}{2}=-L\,\big(R_{k+1}-R_k\big)\leq -L\,\big(t-R_k \big)\leq g(t)-g(R_k)\\
\\
\leq L\,(t-R_k)\leq L\,(R_{k+1}-R_k)=L\,\dfrac{g(R_k)}{2L}.
\end{array}\end{equation*}
This implies \eqref{g5}.}
\end{remark}
We  suppose that the boundary value ${\bf a}\in
W^{1/2,2}(\partial\Omega)$ has a compact support.
Let
$$\begin{array}{l}
\int\limits_{\Gamma_i} {\bf a }\cdot {\bf n} \, dS=\mathbb{F}_i, \ i=0,...,N,
\end{array}$$
be the fluxes of the boundary value ${\bf a}$ over $\partial\Omega.$ 
Since the total flux has to be equal to zero (the necessary flux compatibility condition), we have
$$\begin{array}{l}\int\limits_{\sigma(R)} {\bf u}\cdot {\bf n}\,dS=-\sum\limits_{i=0}^N \mathbb{F}_i,\end{array} \ \ R>R_0>0,$$
where
$\sigma(R)$ is a cross section of the outlet $D.$

Two main purposes of this paper are: 

1) to construct a suitable symmetric
extension ${\bf A}$ of the boundary data ${\bf a}$
which satisfies the
Leray--Hopf type inequalities
\begin{equation}\label{ws4}\begin{array}{l}
\big|\int\limits_{\Omega_{k+1}}
({\bf w}\cdot\nabla){\bf w}\cdot{\bf A}\, dx\big| \leq
\varepsilon\int\limits_{\Omega_{k+1}}
|\nabla\textbf{w}|^2\, dx, \\
\\
\big|\int\limits_{\omega_k}
({\bf w}\cdot\nabla){\bf w}\cdot{\bf A} \, dx\big|\leq
\varepsilon\int\limits_{\omega_k}
|\nabla\textbf{w}|^2\, dx,
\end{array}\end{equation}
where ${\bf w}\in W^{1,2}_{loc}(\overline{\Omega})$ is an arbitrary solenoidal
function  with ${\bf w}\big|_{\partial\Omega}=0$ and $\varepsilon$ can be chosen arbitrary small;

2) to prove the existence of at least one weak symmetric solution ${\bf u}$ to problem \eqref{prad0}.

\setcounter{lem}{0}
\setcounter{remark}{0}
\setcounter{equation}{0}
\section{Construction of the Extension}

We construct a symmetric extension ${\bf A}$ of the boundary value ${\bf a}$ as a sum:
$${\bf A}={\bf B}_0+{\bf B}_{\infty}.$$
In order to construct an extension ${\bf B}_0$ we ``remove" the fluxes  $\mathbb{F}_i,\ \ i=0,...,N-1,$ to the boundary $\Gamma_N$ and then we extend the modified boundary value which has zero fluxes on $\Gamma_i, \ i=0,...,N-1,$ into $\Omega.$ After this step we get the flux $\sum\limits_{i=0}^N \mathbb{F}_i$ on $\Gamma_N.$ Then by removing it to infinity and extending the modified boundary value from $\Gamma_N$ into  $\Omega$ we construct the extension ${\bf B}_{\infty}.$ In general if the stringent outflow condition is not valid one cannot expect that there exists such an extension (see the counterexample in \cite{Tak}). However, under our symmetry assumptions such an extension can be constructed. The first part of the construction is inspired by some ideas of Fujita \cite{Fu1} and the second part - by techniques proposed in \cite{PilSol}.

\subsection{Construction of the Extension ${\bf B}_0.$}

Before we start to construct the extension ${\bf B}_0$ we introduce some auxiliary functions.
\\For $x\in D^{(out)}, \ x_2>0,$ we set (see \cite{PilSol})
\begin{equation}\label{xi0}\begin{array}{l}
\xi(x)=\xi(x_1,x_2)=\Psi\Big( \varepsilon\,\ln\dfrac{\gamma(g(x_1)-x_2)}{x_2} \Big),
\end{array}\end{equation}
where $0\leq\Psi\leq 1$ is a smooth monotone cut-off function:
\begin{equation*}\label{fi}
\begin{array}{l}
\Psi(t)=\begin{cases}
0, \ \ t\leq 0,\\
1, \ \ t\geq 1.
\end{cases}
\end{array}
\end{equation*}

\begin{lem}\label{xi}
$\xi$ is a smooth function vanishing near $x_2=g(x_1)$ and equal to $1$ in a neighbourhood of $x_2=0.$ Moreover it holds
\begin{equation}\label{xi1}\begin{array}{l}
\Big|\dfrac{\partial\xi}{\partial x_i}\Big|\leq\dfrac{c\,\varepsilon}{x_2}, \ \ \ i=1,2,
\end{array}\end{equation}

\begin{equation}\label{xi2}\begin{array}{l}
\Big|\dfrac{\partial\xi}{\partial x_i}\Big|\leq\dfrac{C(\varepsilon)}{g(x_1)}, \ \ \ \Big|\dfrac{\partial^2\xi}{\partial x_i\partial x_j}\Big|\leq\dfrac{C(\varepsilon)}{g^2(x_1)} \ \ \ i,j=1,2,
\end{array}\end{equation}
where $c$ is independent of $\varepsilon$ and $C(\varepsilon)$ denotes a constant depending on $\varepsilon.$
\end{lem}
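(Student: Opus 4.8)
The plan is to analyze the function $\xi$ directly from its definition \eqref{xi0} by tracking the argument $t(x)=\varepsilon\ln\frac{\gamma(g(x_1)-x_2)}{x_2}$ of the cut-off $\Psi$. First I would establish the qualitative claims. As $x_2\to 0^+$ (with $x_1$ fixed, $x\in D^{(out)}$), the quantity $\frac{\gamma(g(x_1)-x_2)}{x_2}\to+\infty$, so $t\to+\infty$ and hence $\xi=\Psi(t)=1$ for $x_2$ small; in fact $\xi\equiv1$ precisely where $t\ge 1$, i.e. where $\frac{\gamma(g(x_1)-x_2)}{x_2}\ge e^{1/\varepsilon}$, which is a genuine one-sided neighbourhood of $x_2=0$. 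Symmetrically, as $x_2\to g(x_1)^-$ the argument of the logarithm tends to $0$, so $t\to-\infty$ and $\xi=\Psi(t)=0$ near $x_2=g(x_1)$. Smoothness of $\xi$ on $\{x_2>0\}$ follows because $g$ is smooth, the logarithm is smooth on the region where $0<x_2<g(x_1)$, and $\Psi$ is smooth; on the open sets where $\xi$ is constantly $0$ or $1$ there is nothing to check, and on the transition strip $0<t<1$ all the compositions are of smooth functions of $x$.

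Next I would prove the gradient bound \eqref{xi1}. Write $\xi=\Psi(t)$, so $\frac{\partial\xi}{\partial x_i}=\Psi'(t)\,\frac{\partial t}{\partial x_i}$, and $\Psi'$ is bounded since $\Psi$ is a fixed smooth cut-off. It remains to bound $\frac{\partial t}{\partial x_i}$. Computing, $t=\varepsilon\big(\ln\gamma+\ln(g(x_1)-x_2)-\ln x_2\big)$, hence
\begin{equation*}
\frac{\partial t}{\partial x_1}=\varepsilon\,\frac{g'(x_1)}{g(x_1)-x_2},\qquad
\frac{\partial t}{\partial x_2}=\varepsilon\Big(\frac{-1}{g(x_1)-x_2}-\frac1{x_2}\Big).
\end{equation*}
The key point is that $\Psi'(t)\ne0$ only on the transition strip $0\le t\le1$, i.e. only where $1\le\frac{\gamma(g(x_1)-x_2)}{x_2}\le e^{1/\varepsilon}$; on this strip $g(x_1)-x_2$ and $x_2$ are comparable (with constants depending on $\varepsilon$), and moreover $x_2\le g(x_1)-x_2\le\gamma^{-1}e^{1/\varepsilon}x_2$, so on the strip $\frac1{g(x_1)-x_2}\le\frac1{x_2}$ up to a factor, and $|g'|$ is bounded by hypothesis. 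Therefore on the support of $\Psi'(t)$ both partial derivatives of $t$ are bounded by $c\,\varepsilon/x_2$ with $c$ independent of $\varepsilon$ (the independence comes from the factor $\varepsilon$ in front of $t$; the $\varepsilon$-dependence of the strip width only enters the comparison constant between $x_2$ and $g(x_1)-x_2$, and one checks $\frac{1}{g(x_1)-x_2}\le\frac1{x_2}$ outright since $g(x_1)-x_2\ge x_2$ on $\{t\ge0\}\supseteq\{t\in[0,1]\}$). This gives \eqref{xi1}.

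For \eqref{xi2} I would combine \eqref{xi1} with the geometric fact that on the support of $\nabla\xi$ the coordinate $x_2$ is comparable to $g(x_1)$: indeed $t\le1$ forces $\gamma(g(x_1)-x_2)\le e^{1/\varepsilon}x_2$, i.e. $x_2\ge\frac{\gamma}{\gamma+e^{1/\varepsilon}}g(x_1)$, so $1/x_2\le C(\varepsilon)/g(x_1)$; substituting into \eqref{xi1} yields the first bound in \eqref{xi2}. For the second-derivative bound, differentiate once more: $\frac{\partial^2\xi}{\partial x_i\partial x_j}=\Psi''(t)\frac{\partial t}{\partial x_i}\frac{\partial t}{\partial x_j}+\Psi'(t)\frac{\partial^2 t}{\partial x_i\partial x_j}$. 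The first term is bounded by $C(\varepsilon)/g^2(x_1)$ using the already-proved first bound in \eqref{xi2} twice. For the second term, the second derivatives of $t$ involve $\frac{g''(x_1)}{g(x_1)-x_2}$, $\frac{(g'(x_1))^2}{(g(x_1)-x_2)^2}$, $\frac1{(g(x_1)-x_2)^2}$ and $\frac1{x_2^2}$; on the support of $\Psi'(t)$ all denominators are comparable to $g^2(x_1)$, while $g''$ is controlled via the hypothesis that $g\,g''$ is bounded (so $g''\le \const/g$, giving $\frac{g''}{g-x_2}\le C(\varepsilon)/g^2$) and $g'$ is bounded. Hence the whole second derivative is $\le C(\varepsilon)/g^2(x_1)$, proving \eqref{xi2}.

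The main obstacle, and the only place needing care, is the bookkeeping of $\varepsilon$-dependence: one must be sure that the constant $c$ in \eqref{xi1} does not blow up as $\varepsilon\to0$. This is exactly why the estimate is stated in the scale-invariant form $|\partial\xi/\partial x_i|\le c\varepsilon/x_2$ rather than in terms of $g$ — on the transition strip the inequality $g(x_1)-x_2\ge x_2$ (equivalently $t\ge0$) is $\varepsilon$-free, so the bound $|\partial t/\partial x_i|\le(\text{bounded})\cdot\varepsilon/x_2$ with an absolute constant survives, whereas rewriting $1/x_2$ as $C(\varepsilon)/g(x_1)$ necessarily costs a factor $e^{1/\varepsilon}$, which is harmless in \eqref{xi2} because there we only claim an $\varepsilon$-dependent constant.
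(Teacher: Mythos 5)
Your proposal is correct and follows essentially the same route as the paper: compute $\partial\xi/\partial x_i$ via the chain rule, observe that $\nabla\xi$ is supported where $1\le\gamma(g(x_1)-x_2)/x_2\le e^{1/\varepsilon}$, use the lower bound of that condition to get the $\varepsilon$-uniform estimate $c\,\varepsilon/x_2$ and the upper bound to convert $1/x_2$ into $C(\varepsilon)/g(x_1)$, and then differentiate once more, controlling $g''$ through the boundedness of $g\,g''$. Your extra care about where the factor $e^{1/\varepsilon}$ enters matches the paper's constants exactly, so there is nothing to add.
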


\begin{proof}
First one notices that the support  of $\nabla\xi$ is contained in the set where
\begin{equation}\label{xi3}\begin{array}{l}
1\leq\dfrac{\gamma\big(g(x_1)-x_2 \big)}{x_2}\leq e^{1/{\varepsilon}} \ \Leftrightarrow \ \ \dfrac{\big(1+\gamma\big)\,x_2}{\gamma}\leq g(x_1)\leq\dfrac{\big(e^{1/{\varepsilon}}+\gamma\big)\,x_2}{\gamma}.
\end{array}\end{equation}
Then since $\xi(x)=\Psi\big(\varepsilon\,\ln\big(g(x_1)-x_2\big)-\varepsilon\,\ln\,x_2+\varepsilon\,\ln\,\gamma \big)$ one gets
\begin{equation}\label{xi4}\begin{array}{l}
\dfrac{\partial\xi}{\partial x_1}=\Psi'\cdot\dfrac{\varepsilon\,g'(x_1)}{g(x_1)-x_2}, \ \ \ 
\dfrac{\partial\xi}{\partial x_2}=\Psi'\cdot\varepsilon\Big(\dfrac{-1}{g(x_1)-x_2}-\dfrac{1}{x_2}\Big),
\end{array}\end{equation}
where $\Psi'$ is taken at the point $\varepsilon\,\ln\dfrac{\gamma\,\big(g(x_1)-x_2 \big)}{x_2}$. Since we assumed that $g'$ is bounded and $\Psi'$ is bounded as well one derives from \eqref{xi3}, \eqref{xi4}
\begin{equation*}\begin{array}{l}
\Big|\dfrac{\partial\xi}{\partial x_1}\Big|\leq\dfrac{c\,\varepsilon}{x_2}, \ \ \ 
\Big|\dfrac{\partial\xi}{\partial x_2}\Big|\leq c\,\varepsilon\,\Big(\dfrac{1}{g(x_1)-x_2}+\dfrac{1}{x_2}\Big)\leq\dfrac{c\,\varepsilon}{x_2}
\end{array}\end{equation*}
for some constant $c$ and
$$\begin{array}{l}
\Big|\dfrac{\partial\xi}{\partial x_i}\Big|\leq\dfrac{c\,\varepsilon}{x_2}\leq\dfrac{c\,\varepsilon\big(\gamma+e^{1/{\varepsilon}}\big)}{\gamma\,g(x_1)}, \ \ \ i=1,2.
\end{array}$$
Thus, it remains only to prove the last inequality of \eqref{xi2}. Differentiating \eqref{xi4} we get 
\begin{equation*}\begin{array}{l}
\dfrac{\partial^2\xi}{\partial x_1^2}=\Psi''\Big(\dfrac{\varepsilon\,g'(x_1)}{g(x_1)-x_2} \Big)^2+\Psi'\varepsilon\Big(\dfrac{g''\big(g(x_1)-x_2\big)-g'\,^2}{\big(g(x_1)-x_2\big)^2} \Big).
\end{array}\end{equation*}
Since we assumed that $g''\,g$ is bounded, by \eqref{xi3} we obtain
\begin{equation*}\begin{array}{l}
\Big|\dfrac{\partial^2\xi}{\partial x_1^2} \Big|\leq\dfrac{c\,\varepsilon}{\big(g(x_1)-x_2\big)^2}\leq\dfrac{c\,\varepsilon}{x_2^2}\leq\dfrac{c\,\varepsilon\big(\dfrac{\gamma+e^{1/\varepsilon}}{\gamma} \big)^2}{g^2(x_1)}=\dfrac{C(\varepsilon)}{g^2(x_1)}.
\end{array}\end{equation*}
Similarly we have
\begin{equation*}\begin{array}{rcl}
\Big|\dfrac{\partial^2\xi}{\partial x_1\partial x_2} \Big|&=&\Big|\Psi''\varepsilon^2\dfrac{g'(x_1)}{g(x_1)-x_2}\cdot\Big(-\dfrac{1}{g(x_1)-x_2}-\dfrac{1}{x_2}\Big)+\Psi'\varepsilon\dfrac{g'(x_1)}{\big(g(x_1)-x_2\big)^2}\Big|\\
\\
&\leq &\dfrac{c\,\varepsilon}{x_2^2}\leq\dfrac{C(\varepsilon)}{g^2(x_1)},\end{array}\end{equation*}
\begin{equation*}\begin{array}{rcl}
\Big|\dfrac{\partial^2\xi}{\partial x_2^2} \Big|&=&\Big|\Psi''\varepsilon^2\Big(-\dfrac{1}{g(x_1)-x_2}-\dfrac{1}{x_2}\Big)^2+\Psi'\varepsilon\,\Big(-\dfrac{1}{\big(g(x_1)-x_2 \big)^2}+\dfrac{1}{x_2^2}\Big)\Big|\\
\\
&\leq & \dfrac{c\,\varepsilon}{x_2^2}\leq\dfrac{C(\varepsilon)}{g^2(x_1)}.
\end{array}\end{equation*}
This completes the proof of the Lemma.
\end{proof}
We set 
\begin{equation}\label{zeta1}\begin{array}{l}
\tilde{\bxi}(x)=(\tilde{\xi}_1,  \tilde{\xi}_2)=\begin{cases}
\Big(-\dfrac{\partial\xi(x_1,x_2)}{\partial x_2}, \  \dfrac{\partial\xi(x_1,x_2)}{\partial x_1} \Big), \ \ x_2> 0,\\
\\
\Big(-\dfrac{\partial\xi(x_1,-x_2)}{\partial x_2}, \  -\dfrac{\partial\xi(x_1,-x_2)}{\partial x_1} \Big), \ \ x_2< 0.
\end{cases}
\end{array}\end{equation}
Then we have
\begin{lem}\label{tilde}
$\tilde{\bxi}$ is a smooth solenoidal symmetric vector field such that for any cross section $\sigma^{(out)}$ of $D^{(out)}$ one has
\begin{equation}\label{xi5}\begin{array}{l}
\int\limits_{\sigma^{(out)}}\tilde{\bxi}\cdot {\bf e}_1\,dx_2=2.
\end{array}\end{equation}
\end{lem}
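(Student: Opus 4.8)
The plan is to verify the three claimed properties of $\tilde{\bxi}$ in turn: smoothness, solenoidality, symmetry, and finally the normalization of the flux \eqref{xi5}. For the first three I would proceed as follows. The map $(\tilde\xi_1,\tilde\xi_2)=(-\partial_{x_2}\xi,\partial_{x_1}\xi)$ for $x_2>0$ is the ``curl" of the scalar $\xi$, so it is automatically divergence-free and smooth wherever $\xi$ is smooth; by Lemma~\ref{xi} the function $\xi$ is smooth on $D^{(out)}\cap\{x_2>0\}$ and equals $1$ in a neighbourhood of $x_2=0$ (hence all its derivatives vanish there). Therefore $\tilde{\bxi}$ vanishes identically in a neighbourhood of $\{x_2=0\}$, which means the piecewise definition \eqref{zeta1} glues together into a $C^\infty$ field across the axis — there is simply nothing to match because both pieces are $0$ near $x_2=0$. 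Checking symmetry in the sense of \eqref{ss} is a direct computation: writing $\eta(x)=\xi(x_1,-x_2)$ for the lower piece, we have $\partial_{x_2}\eta=-(\partial_{x_2}\xi)(x_1,-x_2)$ and $\partial_{x_1}\eta=(\partial_{x_1}\xi)(x_1,-x_2)$, and one reads off from \eqref{zeta1} that $\tilde\xi_1(x_1,-x_2)=\tilde\xi_1(x_1,x_2)$ and $\tilde\xi_2(x_1,-x_2)=-\tilde\xi_2(x_1,x_2)$; so $\tilde\xi_1$ is even in $x_2$ and $\tilde\xi_2$ is odd, as required. (The sign in the second component of the lower branch of \eqref{zeta1} is precisely what is needed to make this work.) Divergence-freeness of the lower piece can be checked the same way, or deduced from the fact that a symmetric extension across the axis of a solenoidal field with vanishing normal component on the axis is again solenoidal.

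The substantive point is the flux identity \eqref{xi5}. Fix a cross section $\sigma^{(out)}=\{x_1=t\}\cap D^{(out)}=\{(t,x_2):|x_2|\le g(t)\}$ for some $t>R_*$. Then
\begin{equation*}
\int\limits_{\sigma^{(out)}}\tilde{\bxi}\cdot{\bf e}_1\,dx_2
=\int\limits_{-g(t)}^{g(t)}\tilde\xi_1(t,x_2)\,dx_2
=2\int\limits_{0}^{g(t)}\tilde\xi_1(t,x_2)\,dx_2
=-2\int\limits_{0}^{g(t)}\frac{\partial\xi}{\partial x_2}(t,x_2)\,dx_2,
\end{equation*}
where the middle equality uses the evenness of $\tilde\xi_1$ just established. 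The last integral telescopes: it equals $-2\big(\xi(t,g(t))-\xi(t,0)\big)$. By Lemma~\ref{xi}, $\xi$ vanishes near $x_2=g(x_1)$, so $\xi(t,g(t))=0$, and $\xi$ equals $1$ near $x_2=0$, so $\xi(t,0)=1$. Hence the expression equals $-2(0-1)=2$, which is \eqref{xi5}.

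I expect no real obstacle here — the lemma is a bookkeeping statement about a stream-function construction. The one place to be careful is the gluing across the $x_1$-axis: one must be sure that $\tilde{\bxi}$ really is smooth (not merely continuous) there, and the cleanest way to see this is to invoke the fact from Lemma~\ref{xi} that $\xi\equiv 1$ on a full neighbourhood of $\{x_2=0\}$ in $D^{(out)}$, so $\nabla\xi\equiv 0$ there and both branches of \eqref{zeta1} vanish to infinite order at the axis. A secondary point worth a sentence is that the cross section $\sigma^{(out)}$ is an arbitrary one — vertical cross sections $\{x_1=t\}$ are the natural choice and suffice for the applications, but the same telescoping argument works for any curve joining the upper to the lower boundary of $D^{(out)}$ once one notes that $\tilde{\bxi}$ is solenoidal and compactly supported away from the axis, so the flux is independent of the cross section. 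Finally, the verification that the lower branch is solenoidal should be recorded explicitly, since the sign flip in \eqref{zeta1} makes it $(-\partial_{x_2}\eta,-\partial_{x_1}\eta)$ rather than $(-\partial_{x_2}\eta,\partial_{x_1}\eta)$; one checks $\partial_{x_1}(-\partial_{x_2}\eta)+\partial_{x_2}(-\partial_{x_1}\eta)=-2\,\partial_{x_1}\partial_{x_2}\eta$, which is \emph{not} obviously zero — so in fact one should double-check the intended formula, or interpret the lower branch as built from the stream function $-\xi(x_1,-x_2)$, whose curl is $(\partial_{x_2}\xi(x_1,-x_2)\cdot(-1)\cdot(-1),\,-\partial_{x_1}\xi(x_1,-x_2))=$ the stated expression, and which is solenoidal by construction. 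Sorting out this sign convention cleanly is the only delicate bookkeeping in the proof.
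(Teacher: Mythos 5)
Your proof is correct and the flux computation is exactly the paper's own argument: use evenness of $\partial\xi/\partial x_2$ to reduce to $2\int_0^{g(x_1)}(-\partial_{x_2}\xi)\,dx_2$ and telescope using $\xi(x_1,g(x_1))=0$, $\xi(x_1,0)=1$. The paper proves only the flux identity and merely asserts smoothness, solenoidality and symmetry; your additional verifications of those properties (including the correct resolution of the sign convention in the lower branch of \eqref{zeta1} as the curl of the stream function $-\xi(x_1,-x_2)$) are accurate but do not constitute a different approach.
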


\begin{proof}
Since function $\dfrac{\partial \xi}{\partial x_2}$ is even in $x_2$ we obtain
\begin{equation*}\begin{array}{l}
\int\limits_{\sigma^{(out)}}\tilde{\bxi}\cdot {\bf e}_1\,dx_2=2\int\limits_0^{g(x_1)}\big(-\dfrac{\partial \xi}{\partial x_2} \big)\,dx_2=-2\,\xi(x_1,g(x_1))+2\xi(x_1,0)=2.
\end{array}\end{equation*}
\end{proof}

\begin{figure}[ht!]
\centering
\includegraphics[scale=0.8]{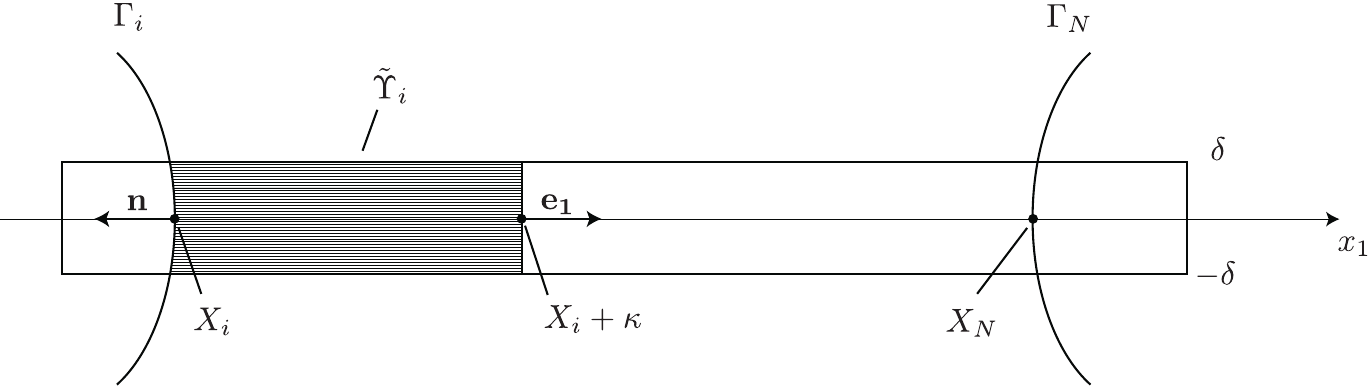}
\caption{The strip $\Upsilon_i$}
\end{figure}
Now we start to construct the extension ${\bf B}_0.$ Let us choose $\delta$ small enough in such a way that the straight line $x_2=\delta$ cuts each of the $\Gamma_i, \ \ i=1,...,N,$ at only two points. For $i=0,...,N-1$ we define the thin strips $\Upsilon_i=[X_i-\eta_i, \, X_N+\eta_N]\times [-\delta,\,\delta],$ where $\eta_i$ and $\eta_N$ are small positive numbers and note that the points $(X_i-\eta_i,\,0)$ and $(X_N+\eta_N,\,0)$ are outside of the domain $\Omega$ (see Fig.\,1).
Then on each strip $\Upsilon_i\cap\Omega, \ \ i=0,...,N-1,$ joining $\Gamma_i$ to $\Gamma_N$ we define ${\bf b}_i$ in the following way 
\begin{equation*}\label{xi7}
\begin{array}{l}
{\bf b}_i (x)=-\mathbb{F}_i\,(\tilde{\xi}_1,  \tilde{\xi}_2)=
\begin{cases}
\dfrac{\mathbb{F}_i}{2}\Big(\dfrac{\partial\xi_{\delta}(x_2)}{\partial x_2},\,0 \Big), \ \ \ {\rm in} \ \Upsilon_i\cap\Omega, \ \ x_2>0,\\
\\
\dfrac{\mathbb{F}_i}{2}\Big(\dfrac{\partial\xi_{\delta}(-x_2)}{\partial x_2},\,0 \Big), \ \ \ {\rm in} \ \Upsilon_i\cap\Omega, \ \ x_2<0,
\\
\\
(0,\,0), \ \ {\rm in} \ \overline{\Omega}\setminus(\Upsilon_i\cap\Omega),
\end{cases}
\end{array}
\end{equation*}
where
\begin{equation*}\begin{array}{l}
\xi_{\delta}(x)=\Psi\Big(\varepsilon\,\ln\dfrac{\delta-x_2}{x_2}\Big).
\end{array}\end{equation*}
Notice that the Lemma\,\ref{xi} and Lemma\,\ref{tilde} are valid if we take $\gamma=1$ and $g(x_1)=\delta.$ 
\\Since each vector field ${\bf b}_i$ is solenoidal and vanishes on the upper and lower boundaries of $\Upsilon_i$, we have
$$\begin{array}{l}
0=\int\limits_{\tilde{\Upsilon}_i} {\rm div}\,{\bf b}_i\,dx=\int\limits_{\partial\tilde{\Upsilon}_i}{\bf b}_i\cdot \nn \,dS
=\int\limits_{\Gamma_i}{\bf b}_i\cdot {\bf n}\, dS+\int\limits_{(X_i+\kappa)\times[-\delta,\delta]}{\bf b}_i\cdot {\bf e}_1 \,dS\\
\\
=\int\limits_{\Gamma_i}{\bf b}_i\cdot {\bf n}\, dS-
\dfrac{\mathbb{F}_i}{2}\cdot2\,\int\limits_{0}^{\delta} \big(-\dfrac{\partial\xi_{\delta}}{\partial x_2}\big)\,dx_2
=\int\limits_{\Gamma_i}{\bf b}_i\cdot {\bf n} \,dS-\mathbb{F}_i, \ \ \ \forall i=0,...,N-1,
\end{array}$$
where $\tilde{\Upsilon}_i$ is the domain enclosed by $\Gamma_i,$ $(X_i+\kappa)\times[-\delta, \delta]$ ($\kappa$ is a small positive number) and the lines $x_2=\delta,$ $x_2=-\delta$ (see Fig.\,2).
Therefore, it follows that if ${\bf n}$ denotes the unit outward normal to $\partial\Omega$ on $\Gamma_i$ one has
$$\begin{array}{l}
\int\limits_{\Gamma_i}{\bf b}_i\cdot \nn \,dS=\mathbb{F}_i, \ \ \ \forall i=0,...,N-1.
\end{array}$$
We set
$$\mathbf{b}=\sum\limits_{i=0}^{N-1}\mathbf{b}_i.$$
Clearly ${\bf b}$ is a symmetric solenoidal vector field. Moreover for every $i=0,...,N-1$ one has (note that the flux of ${\bf b}_i$ vanishes on $\Gamma_j$ for every $i\neq j$)
\begin{equation}\label{zero}\begin{array}{l}
\int\limits_{\Gamma_i}({\bf a}-{\bf b})\cdot {\bf n}\,dS=\int\limits_{\Gamma_i}({\bf a}-{\bf b}_i)\cdot {\bf n}\,dS=\mathbb{F}_i-\mathbb{F}_i=0.
\end{array}\end{equation}
Because of \eqref{zero} there exists (see Lemma\,\ref{extension}) an extension ${\bf A}_0$ of $\big({\bf a}-{\bf b}\big)\Big|_{\bigcup\limits_{i=0}^{N-1}\Gamma_i}$ such that ${\rm supp}\,{\bf A}_0$ is contained in a small neighborhood of $\bigcup\limits_{i=0}^{N-1}\Gamma_i,$
\begin{equation*}\label{b0}\begin{array}{l}
{\rm div}\,{\bf A}_0=0, \ \ {\bf A}_0|_{\bigcup\limits_{i=0}^{N-1}\Gamma_i}=\big({\bf a}-{\bf b}\big)\Big|_{\bigcup\limits_{i=0}^{N-1}\Gamma_i},
\end{array}
\end{equation*}
and ${\bf A}_0$ satisfies the Leray--Hopf inequalities\footnote{Notice that the integral over $\omega_k$ is equal to zero since ${\bf A}_0=0$ in $\omega_k.$} for every solenoidal function ${\bf w} \in W^{1,2}_{loc}(\overline{\Omega})$ with ${\bf w}|_{\partial\Omega}=0$
\begin{equation}\label{binn0}\begin{array}{l}
\big|\int\limits_{\Omega_{k+1}}({\bf w} \cdot \nabla){\bf w} \cdot {\bf A}_0\, dx\big| \leq c\,\varepsilon\int\limits_{\Omega_{k+1}}|\nabla {\bf w}|^2\,dx.
\end{array}\end{equation}
\\Notice that the vector field ${\bf A}_0$ is not necessary symmetric. However, since the boundary value $\big({\bf a}-{\bf b}\big)\Big|_{\bigcup\limits_{i=0}^{N-1}\Gamma_i}$ is symmetric, ${\bf A}_0$ can be symmetrized to $\tilde{\bf A}_0$ where for ${\bf A}= ({A}_{1},{A}_{2})$ we define $\tilde{\bf A}= (\tilde{A}_{1},\tilde{A}_{2})$ as follows:
\begin{equation}\label{sym}\begin{array}{l}
\tilde{A}_{1}(x)= \dfrac{1}{2}\bigg(A_{1} (x_1,x_2) +  A_{1} (x_1,-x_2)\bigg),\ \ x\in\Omega,\\
\\
\tilde{A}_{2}(x)=\dfrac{1}{2}\bigg(A_{2} (x_1,x_2) -  A_{2} (x_1,-x_2)\bigg), \ \ x\in\Omega.
\end{array}
\end{equation}
Define then
$$\mathbf{B}_0=\tilde{\mathbf{A}}_0+\mathbf{b}.$$
Hence, ${\bf B}_0$ is a symmetric extension of the boundary value ${\bf a}$ from $\bigcup\limits_{i=0}^{N-1}\Gamma_i$. Moreover
\begin{equation*}\begin{array}{l}
\int\limits_{\Gamma_N}({\bf a}-{\bf b})\cdot {\bf n}\,dS= \sum\limits_{i=0}^{N} \mathbb{F}_i=\mathbb{F}.
\end{array}\end{equation*}
It remains to prove that ${\bf B}_0$ satisfies the Leray-Hopf inequalities. It is enough to prove that each ${\bf b}_i$, $i=0,\cdots, N-1,$ satisfies the Leray-Hopf inequalities.

Let ${\bf w} =(w_1,w_2)\in W^{1,2}_{loc}(\overline{\Omega}), \ {\bf w}|_{\partial\Omega}=0,$ be a symmetric and solenoidal vector field. Then
\begin{equation*}\begin{array}{l}
\int\limits_{\Omega_{k+1}}({\bf w} \cdot \nabla)\,{\bf w}\cdot{\bf b}_i\,dx=-\F_i\int\limits_{\Upsilon_i\cap\overline{\Omega}}\big(w_1 \dfrac{\partial w_1}{\partial x_1}+w_2\dfrac{\partial w_1}{\partial x_2}\big)\,\tilde{\xi}_1(x_2)\,dx
\end{array}\end{equation*}
Since $\dfrac{w_1^2(x)}{2}\, \tilde{\xi}_1(x_2)$ vanishes on the boundary of $\Upsilon_i\cap \overline \Omega$ one has
\begin{equation*}\begin{array}{l}
\int\limits_{\Upsilon_i\cap\overline{\Omega}}w_1(x) \dfrac{\partial w_1(x)}{\partial x_1}\, \tilde{\xi}_1(x_2)\,dx=\dfrac{1}{2}\int\limits_{\Upsilon_i\cap\overline{\Omega}} \dfrac{\partial\big(w_1^2(x)\, \tilde{\xi}_1(x_2)\big)}{\partial x_1}\,dx=0. 
\end{array}\end{equation*}
Therefore, using the definition of $\tilde{\xi}_1,$ applying the estimate \eqref{xi1} and the Hardy\footnote{For the application of the Hardy type inequality we used the fact that $w_2$ vanishes on $x_2=0$.} type inequality one gets
\begin{equation*}\begin{array}{l}
\big|\int\limits_{\Omega_{k+1}}({\bf w} \cdot \nabla){\bf w}\cdot{\bf b}_i\,dx\big|
\leq  |\F_i|\int\limits_{\Upsilon_i\cap\Omega}\Big|w_2\,\dfrac{\partial w_1}{\partial x_2}\,\tilde{\xi}_1 \Big|\,dx\\
\\
\leq  c\,\varepsilon|\F_i|\int\limits_{\Upsilon_i\cap\Omega} \dfrac{|w_2|}{|x_2|}\,\big|\dfrac{\partial w_1}{\partial x_2} \big| \,dx
\leq  c\,\varepsilon|\F_i|\int\limits_{\Omega_{k+1}}|\nabla {\bf w}|^2\,dx.
\end{array}\end{equation*}
\\Thus, we have proved the following lemma.
\begin{lem}\label{binnl} Assume that the boundary value ${\bf a}$ is a symmetric function in $W^{1/2,2}(\partial\Omega)$ having a compact support. Denote by $\tilde{{\bf a}}$ the restriction of ${\bf a}$ to $\bigcup\limits_{i=0}^{N-1}\Gamma_i.$ Then for every $\varepsilon > 0$ there exists a symmetric solenoidal extension ${\bf B}_0$ in $\Omega$ satisfying ${\bf B}_0\big|_{\bigcup\limits_{i=0}^{N-1}\Gamma_i}=\tilde{{\bf a}},$ ${\bf B}_0\big|_{\partial\Omega\setminus\bigcup\limits_{i=0}^{N-1}\Gamma_i}=0$  and the Leray-Hopf inequalities\footnote{Notice that the integral over $\omega_k$ is equal to zero since ${\bf B}_0=0$ in $\omega_k.$}, i.e., for every symmetric solenoidal function ${\bf w} \in W^{1,2}_{loc}(\overline{\Omega})$ with ${\bf w}|_{\partial\Omega}=0$ the following estimates
\begin{equation}\label{binn}\begin{array}{l}
\big|\int\limits_{\Omega_{k+1}}({\bf w} \cdot \nabla){\bf w} \cdot {\bf B}_0\, dx\big| \leq c\,\varepsilon\, \int\limits_{\Omega_{k+1}}|\nabla {\bf w}|^2\,dx
\end{array}\end{equation}
hold.
\end{lem}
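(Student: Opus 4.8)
The plan is to assemble \textbf{B}_0 from the explicit ingredients already built in the text and to verify the two required properties (the boundary trace and the Leray--Hopf inequality) one summand at a time. First I would recall that on each thin strip $\Upsilon_i\cap\Omega$, $i=0,\dots,N-1$, the vector field $\mathbf{b}_i=-\mathbb{F}_i\tilde{\bxi}$ (with $\gamma=1$, $g\equiv\delta$) is smooth, solenoidal, symmetric, vanishes on the horizontal sides $x_2=\pm\delta$ of the strip, and carries exactly the flux $\mathbb{F}_i$ across $\Gamma_i$ while having zero flux across $\Gamma_j$ for $j\neq i$; this is the computation already carried out above via the divergence theorem on $\tilde{\Upsilon}_i$. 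Setting $\mathbf{b}=\sum_{i=0}^{N-1}\mathbf{b}_i$, the difference $(\mathbf{a}-\mathbf{b})\big|_{\bigcup_{i=0}^{N-1}\Gamma_i}$ has zero flux on every $\Gamma_i$, $i\le N-1$, by \eqref{zero}, so Lemma~\ref{extension} furnishes a divergence-free extension $\mathbf{A}_0$ supported in a small neighborhood of $\bigcup_{i=0}^{N-1}\Gamma_i$ that satisfies the Leray--Hopf inequality \eqref{binn0}; its symmetrization $\tilde{\mathbf{A}}_0$ (formula \eqref{sym}) keeps the same trace, remains solenoidal, and still satisfies \eqref{binn0} (the symmetrized field is a convex combination of $\mathbf{A}_0$ and its reflection, and the reflection satisfies the same estimate because the reflected test field is again admissible). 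Then $\mathbf{B}_0:=\tilde{\mathbf{A}}_0+\mathbf{b}$ is symmetric, solenoidal, equals $\tilde{\mathbf{a}}$ on $\bigcup_{i=0}^{N-1}\Gamma_i$, vanishes on the rest of $\partial\Omega$ (since $\mathbf{b}$ vanishes there by construction and $\tilde{\mathbf{A}}_0$ is supported near the inner boundaries away from $\Gamma^*_0$), and all these pieces are supported in $\Omega_0$ (more precisely near the holes), hence vanish on every $\omega_k$, so the second inequality of \eqref{ws4} is trivially satisfied.

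It then remains to establish \eqref{binn}, and since $\tilde{\mathbf{A}}_0$ already obeys \eqref{binn0} it suffices to prove the same bound for each $\mathbf{b}_i$, which is the core of the argument. I would fix a symmetric solenoidal $\mathbf{w}\in W^{1,2}_{loc}(\overline\Omega)$ with $\mathbf{w}|_{\partial\Omega}=0$, write out
\[
\int\limits_{\Omega_{k+1}}(\mathbf{w}\cdot\nabla)\mathbf{w}\cdot\mathbf{b}_i\,dx
=-\mathbb{F}_i\int\limits_{\Upsilon_i\cap\overline\Omega}\Bigl(w_1\frac{\partial w_1}{\partial x_1}+w_2\frac{\partial w_1}{\partial x_2}\Bigr)\tilde\xi_1(x_2)\,dx,
\]
observe that $\tfrac12 w_1^2\tilde\xi_1$ vanishes on $\partial(\Upsilon_i\cap\overline\Omega)$ — on the horizontal sides because $\tilde\xi_1$ vanishes there and on $\Gamma_i$ and the artificial vertical cut because $w_1=0$ on $\partial\Omega$ and $\mathbf{b}_i$ (hence the relevant part of the integrand) is cut off there — so the first term integrates to zero by the divergence theorem. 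For the surviving term I would invoke the gradient bound $|\tilde\xi_1|=|\partial\xi_\delta/\partial x_2|\le c\varepsilon/|x_2|$ from \eqref{xi1}, giving
\[
\Bigl|\int\limits_{\Omega_{k+1}}(\mathbf{w}\cdot\nabla)\mathbf{w}\cdot\mathbf{b}_i\,dx\Bigr|
\le c\,\varepsilon|\mathbb{F}_i|\int\limits_{\Upsilon_i\cap\Omega}\frac{|w_2|}{|x_2|}\Bigl|\frac{\partial w_1}{\partial x_2}\Bigr|\,dx,
\]
and finally apply Cauchy--Schwarz together with Lemma~\ref{hardy} (the Hardy inequality with $\mathcal{L}=\{x_2=0\}\cap\partial(\Upsilon_i\cap\Omega)$, which is legitimate because $w_2$, being the odd component of a symmetric field vanishing on $\partial\Omega$, vanishes on the $x_1$-axis) to absorb $|w_2|/|x_2|$ into $\|\nabla\mathbf{w}\|_{L^2(\Omega_{k+1})}$. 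Summing over $i=0,\dots,N-1$ and combining with \eqref{binn0} yields \eqref{binn} with a constant $c$ independent of $\varepsilon$.

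The point requiring the most care — and the only place where symmetry is genuinely used — is the application of the Hardy inequality: one needs $w_2$ to vanish on the flat piece $\{x_2=0\}$, which is interior to $\Omega$ and not part of $\partial\Omega$, so it is not a boundary condition but a consequence of the symmetry relation \eqref{ss} (oddness of $w_2$ in $x_2$ forces $w_2(x_1,0)=0$). One should also be slightly careful that the strip $\Upsilon_i\cap\Omega$ is a Lipschitz domain and that $\{x_2=0\}\cap\partial(\Upsilon_i\cap\Omega)$ has positive measure so that Lemma~\ref{hardy} applies there; this holds because each $\Gamma_i$ meets the $x_1$-axis and the strip was chosen thin enough that $x_2=\pm\delta$ cuts each $\Gamma_i$ in exactly two points, so $\Upsilon_i\cap\Omega$ is split by the axis into two symmetric Lipschitz halves and the portion of the $x_1$-axis inside it is a nondegenerate segment. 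Everything else is routine: the divergence-theorem cancellation of the $\partial w_1/\partial x_1$ term, the $\varepsilon$-smallness coming entirely from $|\nabla\xi_\delta|\le c\varepsilon/|x_2|$, and the fact that the various extensions live in a fixed compact part of $\Omega$ so the $\omega_k$-inequality is vacuous.
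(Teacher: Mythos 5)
Your proposal follows essentially the same route as the paper: the same decomposition $\mathbf{B}_0=\tilde{\mathbf{A}}_0+\mathbf{b}$ with $\mathbf{b}=\sum_i\mathbf{b}_i$, the same divergence-theorem cancellation of the $w_1\partial_{x_1}w_1$ term, and the same use of the bound $|\tilde\xi_1|\le c\varepsilon/|x_2|$ combined with the Hardy inequality, exploiting that the oddness of $w_2$ forces it to vanish on the $x_1$-axis. The additional care you take (justifying why the symmetrization preserves the Leray--Hopf estimate and why Lemma~\ref{hardy} is applicable on the strip) only makes explicit points the paper leaves implicit; the argument is correct.
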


\begin{remark}
{\rm In the case of a bounded domain the vector field ${\bf B}_0$ is a suitable extension of the boundary value ${\bf a},$ i.e. ${\bf A}={\bf B}_0.$ The idea of the construction of ${\bf B}_0$ is very similar to that of H. Fujita (\cite{Fu1}).}
\end{remark}

\subsection{Construction of the Extension ${\bf B}_{\infty}.$}

After moving all the fluxes through $\Gamma_i, \ \ i=0,...,N-1,$ to the last inner boundary $\Gamma_N$ we need to drain the flux from $\Gamma_N$ to infinity. 
There we consider a function $g$ as in Lemma\,\ref{xi} and suppose that $\gamma$ is chosen such that
\begin{equation}\label{xi6}\begin{array}{l}
{\rm the} \ {\rm curve} \ \ x_2=\dfrac{\gamma}{\gamma+1}\,g(x_1) \ \ {\rm crosses} \ \ \Gamma_N.
\end{array}\end{equation}
Let us introduce the vector field
\begin{equation*}\begin{array}{l}
{\bf b}_{\infty}(x)=-\mathbb{F}\,\tilde{\bxi}=\begin{cases}
\dfrac{\mathbb{F}}{2}\,\Big(\dfrac{\partial\xi(x_1,x_2)}{\partial x_2}, \  -\dfrac{\partial\xi(x_1,x_2)}{\partial x_1} \Big), \ \ x_2> 0,\\
\\
\dfrac{\mathbb{F}}{2}\,\Big(\dfrac{\partial\xi(x_1,-x_2)}{\partial x_2}, \  \dfrac{\partial\xi(x_1,-x_2)}{\partial x_1} \Big), \ \ x_2< 0,
\end{cases}
\end{array}\end{equation*}
where $\xi$ is defined by \eqref{xi0} for $x\in D^{(in)}$ and extended by $0$ into $D.$ Then since for any cross section $\sigma$
\begin{equation*}\begin{array}{l}
\int\limits_{\Gamma_N}{\bf b}_{\infty}\cdot {\bf n}\,dS=-\int\limits_{\sigma}{\bf b}_{\infty}\cdot {\bf n}\,dS=
-\dfrac{\mathbb{F}}{2}\cdot 2\int\limits_0^{\frac{\gamma}{\gamma+1}\,g(x_1)}\dfrac{\partial\xi}{\partial x_2}\,dx_2=\\
\\
=-\mathbb{F}\,\Big(\xi(x_1,\,\frac{\gamma}{\gamma+1}\,g(x_1))-\xi(x_1,\,0) \Big)=\mathbb{F},
\end{array}\end{equation*}
one has
\begin{equation}\label{inff}\begin{array}{l}
\int\limits_{\Gamma_N}\big({\bf a}-{\bf b}-{\bf b}_{\infty} \big)\,{\bf n}\,dS=0.
\end{array}\end{equation}
Because of \eqref{inff} there exists (see Lemma\,\ref{extension}) an extension ${\bf A}_{\infty}$ of $\big({\bf a}-{\bf b}-{\bf b}_{\infty}\big)\Big|_{\Gamma_N}$ such that ${\rm supp}\,{\bf A}_{\infty}$ is contained in a small neighborhood of $\Gamma_N,$
\begin{equation*}\label{b0}\begin{array}{l}
{\rm div}\,{\bf A}_{\infty}=0, \ \ {\bf A}_{\infty}|_{\Gamma_N}=\big({\bf a}-{\bf b}-{\bf b}_{\infty}\big)\Big|_{\Gamma_N},
\end{array}
\end{equation*}
and ${\bf A}_{\infty}$ satisfies the Leray--Hopf inequalities for every solenoidal function ${\bf w} \in W^{1,2}_{loc}(\overline{\Omega})$ with ${\bf w}|_{\partial\Omega}=0$
\begin{equation}\label{binn0}\begin{array}{l}
\big|\int\limits_{\Omega_{k+1}}({\bf w} \cdot \nabla){\bf w} \cdot {\bf A}_{\infty}\, dx\big| \leq c\,\varepsilon \int\limits_{\Omega_{k+1}}|\nabla {\bf w}|^2\,dx,\\
\\
\big|\int\limits_{\omega_{k}}({\bf w} \cdot \nabla){\bf w} \cdot {\bf A}_{\infty}\, dx\big| \leq c\,\varepsilon \int\limits_{\omega_{k}}|\nabla {\bf w}|^2\,dx
\end{array}\end{equation}
with a constant $c$ independent of $k$ and $\varepsilon.$
Notice that the vector field ${\bf A}_{\infty}$ is not necessary symmetric. However, since the boundary value $\big({\bf a}-{\bf b}-{\bf b}_{\infty}\big)\Big|_{\Gamma_N}$ is symmetric, ${\bf A}_{\infty}$ can be symmetrized to $\tilde{\bf A}_{\infty}$ as in \eqref{sym}. Then
\begin{equation*}\begin{array}{l}
{\bf B}_{\infty}={\bf b}+{\bf b}_{\infty}+\tilde{\bf A}_{\infty}
\end{array}\end{equation*}
is a symmetric solenoidal extension of ${\bf a}$ on $\Gamma_N.$ It remains to prove that ${\bf B}_{\infty}$ satisfies the Leray-Hopf inequalities. It is enough to prove that ${\bf b}_{\infty}$ satisfies the Leray-Hopf inequalities.

Let ${\bf w} =(w_1,w_2)\in W^{1,2}_{loc}(\overline{\Omega}), \ {\bf w}|_{\partial\Omega}=0,$ be a symmetric and solenoidal vector field.
We use the well known identity
\begin{equation}\label{wki}\begin{array}{l}
({\bf w}\cdot\nabla)\,{\bf w}=\nabla(\dfrac{1}{2}|{\bf w}|^2)+\big(\dfrac{\partial w_2}{\partial x_1}-\dfrac{\partial w_1}{\partial x_2} \big)\,(-w_2,\,w_1).
\end{array}\end{equation}
Since ${\bf b}_{\infty}$ is solenoidal, it is $L^2-$ orthogonal to the first term of the right-hand side of \eqref{wki}. Then one obtains
\begin{equation}\label{last1}\begin{array}{l}
\Big|\int\limits_{\Omega_{k+1}} ({\bf w}\cdot\nabla)\,{\bf w}\,\cdot {\bf b}_{\infty}\,dx\Big|
\leq |\mathbb{F}|\,\int\limits_{\Omega_{k+1}}\Big|\big(\dfrac{\partial w_2}{\partial x_1}-\dfrac{\partial w_1}{\partial x_2} \big)\,(-w_2\,\tilde{\xi}_1+w_1\,\tilde{\xi}_2)\Big|\,dx\\
\\
\leq |\mathbb{F}|\,\Big(\int\limits_{\Omega_{k+1}}|\nabla {\bf w}|^2dx\Big)^{1/2}\Big(\int\limits_{\Omega_{k+1}}| (-w_2\,\tilde{\xi}_1+w_1\,\tilde{\xi}_2) |^2dx\Big)^{1/2}.
\end{array}\end{equation}
Let $G^{\pm}$ denotes the curve $x_2=\pm g(x_1).$ Then using \eqref{xi1}, \eqref{xi4} for $x\in\Omega$ and $x_2>0,$ we have
\begin{equation}\label{last2}\begin{array}{l}
|\tilde{\bxi}_1|=\Big|\dfrac{\partial\xi}{\partial x_2}\Big|\leq c\varepsilon\dfrac{1}{x_2}, \ \ \ \ \ |\tilde{\bxi}_2|=\Big|\dfrac{\partial\xi}{\partial x_1}\Big|\leq\dfrac{c\,\varepsilon}{{\rm dist}(x,G^+)}.
\end{array}\end{equation}
Therefore, from \eqref{last1} and \eqref{last2} applying\footnote{Here we used the fact that $w_2=0$ on $x_2=0$ and we supposed that ${\bf w}$ is extended by $0$ outside $\Omega.$} Hardy type inequality (see Lemma\,\ref{hardy}) we get
\begin{equation*}\begin{array}{l}
\Big|\int\limits_{\Omega_{k+1}^+} ({\bf w}\cdot\nabla)\,{\bf w}\,\cdot {\bf b}_{\infty}\,dx\Big|\\
\\
\leq  c\,\varepsilon\,|\mathbb{F}|\Big(\int\limits_{\Omega_{k+1}^+}|\nabla {\bf w}|^2dx\Big)^{1/2}
\Big[\Big(\int\limits_{\Omega_{k+1}^+}\,\dfrac{|w_2|^2}{|x_2|^2}\,dx\Big)^{1/2}+ \Big(\int\limits_{\Omega_{k+1}^+}\,\dfrac{|w_1|^2}{{\rm dist}^2(x,G^+)}\,dx\Big)^{1/2}\Big]
\\
\\
\leq  c\,\varepsilon\,|\mathbb{F}|\int\limits_{\Omega_{k+1}^+}|\nabla {\bf w}|^2\,dx,
\end{array}\end{equation*}
where $\Omega_{k+1}^+=\{x\in\Omega_{k+1}: \ x_2>0 \}.$
The same estimate is valid in $\Omega_{k+1}^-.$ Therefore, ${\bf b}_{\infty}$ satisfies the Leray-Hopf inequalities for every symmetric solenoidal function ${\bf w} \in W^{1,2}_{loc}(\overline{\Omega})$ with ${\bf w}|_{\partial\Omega}=0$
\begin{equation}\label{inf}\begin{array}{l}
\big|\int\limits_{\Omega_{k+1}}({\bf w} \cdot \nabla){\bf w} \cdot {\bf b}_{\infty}\, dx\big| \leq c\,\varepsilon\, \int\limits_{\Omega_{k+1}}|\nabla {\bf w}|^2\,dx,\\
\\
\big|\int\limits_{\omega_{k}}({\bf w} \cdot \nabla){\bf w} \cdot {\bf b}_{\infty}\, dx\big| \leq c\,\varepsilon\, \int\limits_{\omega_{k}}|\nabla {\bf w}|^2\,dx.
\end{array}\end{equation}
Moreover, one has the estimates
\begin{equation}\label{inf1}\begin{array}{l}
|{\bf b}_{\infty}|\leq\dfrac{C(\varepsilon)}{g(x_1)}, \ \ \ \ |\nabla{\bf b}_{\infty}|\leq\dfrac{C(\varepsilon)}{g^2(x_1)}, \ \ \ x\in D.
\end{array}\end{equation}
Hence together with Lemma\,\ref{binnl} we proved the following result.
\begin{lem}\label{infl} Assume that the boundary value ${\bf a}$ is a symmetric function in $W^{1/2,2}(\partial\Omega)$ having a compact support. Then for every $\varepsilon > 0$ there exists a symmetric solenoidal extension ${\bf A}={\bf B}_0+{\bf B}_{\infty}$ in $\Omega$ satisfying the Leray-Hopf inequalities, i.e., for every symmetric solenoidal function ${\bf w} \in W^{1,2}_{loc}(\overline{\Omega})$ with ${\bf w}|_{\partial\Omega}=0$ the following estimates
\begin{equation}\label{A}\begin{array}{l}
\big|\int\limits_{\Omega_{k+1}}({\bf w} \cdot \nabla){\bf w} \cdot {\bf A}\, dx\big| \leq c\,\varepsilon\,\int\limits_{\Omega_{k+1}}|\nabla {\bf w}|^2\,dx,\\
\\
\big|\int\limits_{\omega_{k}}({\bf w} \cdot \nabla){\bf w} \cdot {\bf A}\, dx\big| \leq c\,\varepsilon\,\int\limits_{\omega_{k}}|\nabla {\bf w}|^2\,dx
\end{array}\end{equation}
hold. The constant $c$ is independent of $k$ and $\varepsilon.$
\end{lem}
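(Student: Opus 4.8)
The plan is to assemble the final extension $\mathbf A=\mathbf B_0+\mathbf B_\infty$ from the two pieces already built and simply verify that both of the Leray--Hopf inequalities in \eqref{A} follow from what has been established. First I would record that $\mathbf B_0$ is symmetric, solenoidal, supported in a small neighborhood of $\bigcup_{i=0}^{N-1}\Gamma_i$ (hence vanishing in every $\omega_k$ for $k$ large, and more to the point vanishing identically near the outlet), with $\mathbf B_0|_{\bigcup_{i=0}^{N-1}\Gamma_i}=\tilde{\mathbf a}$ and $\mathbf B_0|_{\partial\Omega\setminus\bigcup_{i=0}^{N-1}\Gamma_i}=0$; this is exactly Lemma~\ref{binnl}. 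Likewise $\mathbf B_\infty=\mathbf b+\mathbf b_\infty+\tilde{\mathbf A}_\infty$ is symmetric, solenoidal, equals $\mathbf a$ on $\Gamma_N$, vanishes on the rest of $\partial\Omega$, and — via the estimate on $\mathbf b_\infty$ together with the compact support of $\tilde{\mathbf A}_\infty$ — satisfies \eqref{inf} and the pointwise bounds \eqref{inf1}. Adding the two boundary traces gives $\mathbf A|_{\partial\Omega}=\mathbf a$, so $\mathbf A$ is indeed a symmetric solenoidal extension of the boundary value.

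Next I would prove the two inequalities in \eqref{A} by the triangle inequality, splitting $\mathbf A=\mathbf B_0+\mathbf B_\infty$. For the $\Omega_{k+1}$-integral, Lemma~\ref{binnl} gives $\big|\int_{\Omega_{k+1}}(\mathbf w\cdot\nabla)\mathbf w\cdot\mathbf B_0\,dx\big|\le c\varepsilon\int_{\Omega_{k+1}}|\nabla\mathbf w|^2\,dx$ for every symmetric solenoidal $\mathbf w$ vanishing on $\partial\Omega$, and estimate \eqref{inf} gives the same bound with $\mathbf b_\infty$ in place of $\mathbf B_0$; since the contributions of $\mathbf b$ (which is one of the $\mathbf b_i$-type fields and was already shown to satisfy such an inequality) and of $\tilde{\mathbf A}_\infty$ (compactly supported, hence a standard Leray--Hopf estimate from Lemma~\ref{extension}) are also controlled by $c\varepsilon\int_{\Omega_{k+1}}|\nabla\mathbf w|^2$, summing yields the first line of \eqref{A} with a new constant $c$. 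For the $\omega_k$-integral one uses that $\mathbf B_0\equiv0$ on $\omega_k$ (the integral over $\omega_k$ simply drops out, as noted in the footnote to Lemma~\ref{binnl}), so only the $\mathbf B_\infty$-part survives, and there the second line of \eqref{inf} together with the corresponding localized estimates for $\mathbf b$ and $\tilde{\mathbf A}_\infty$ gives $\big|\int_{\omega_k}(\mathbf w\cdot\nabla)\mathbf w\cdot\mathbf A\,dx\big|\le c\varepsilon\int_{\omega_k}|\nabla\mathbf w|^2\,dx$. The constant $c$ is independent of $k$ precisely because the constants in Lemma~\ref{xi}, Lemma~\ref{tilde} and in \eqref{inf} are, and because the Hardy inequality constant on each $\omega_k$ is uniform (this uses Remark~\ref{remark1}, i.e.\ that $g$ varies by at most a fixed factor over $[R_k,R_{k+1}]$, so the thin strips $\omega_k$ are uniformly Lipschitz after rescaling).

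The only genuinely delicate point is the uniformity of the constant $c$ in $k$ for the $\omega_k$-estimate, since $\omega_k$ recedes to infinity and, in the paraboloidal case, shrinks. Everything else is bookkeeping: collecting the already-proven inequalities \eqref{binn}, \eqref{inf}, the symmetry of the pieces via \eqref{sym}, and the matching of boundary traces. I would handle the uniformity by invoking Remark~\ref{remark1} to pass, on each $\omega_k$, to the rescaled variables $y_1=x_1/g(R_k)$, $y_2=x_2/g(R_k)$ (or rather $y_1=(x_1-R_k)/g(R_k)$), under which $\omega_k$ maps to a domain of bounded geometry independent of $k$; the Hardy inequality \eqref{hardy1} and the bounds \eqref{xi2} are scale-invariant in the right way, so the constant transfers without deterioration. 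Since all of this is already implicitly contained in the derivation of \eqref{inf} — where the estimate was in fact carried out on $\Omega_{k+1}^{\pm}$ with a $k$-independent constant — the proof of Lemma~\ref{infl} reduces to citing Lemmas~\ref{binnl} and the inequalities \eqref{inf}, \eqref{inf1} and adding them, which is what I would write.
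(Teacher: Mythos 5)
Your proposal is correct and follows essentially the same route as the paper: the lemma is obtained by decomposing ${\bf A}={\bf B}_0+{\bf B}_{\infty}$, citing the already-established Leray--Hopf estimates \eqref{binn} for ${\bf B}_0$ (whose support misses $\omega_k$), \eqref{binn0} for the Hopf-type part $\tilde{\bf A}_{\infty}$, and \eqref{inf} for ${\bf b}_{\infty}$ (where the symmetry of ${\bf w}$ and the Hardy inequality with $k$-independent constant do the work), and summing. Your extra remarks on the rescaling of $\omega_k$ via Remark~\ref{remark1} are a harmless elaboration of the uniformity already built into the paper's derivation of \eqref{inf}.
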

\begin{remark}
{\rm The constant $c$ in \eqref{A} is of the type 
\begin{equation*}\begin{array}{l}
c_1\sum\limits_{i=0}^N |\mathbb{F}_i|=c_1\sum\limits_{i=0}^N \big|\int\limits_{\Gamma_i} {\bf a}\cdot {\bf n}\,dS\big|\leq c_2\,\|{\bf a} \|_{W^{1/2,\,2}(\partial\Omega)},
\end{array}\end{equation*}
where $c_2$ is independent of ${\bf a}.$
 }
\end{remark}

\setcounter{lem}{0}
\setcounter{remark}{0}
\setcounter{equation}{0}
\section{Existence Theorem}

We look for the solution $\textbf{u}$ in the form
\begin{equation}\label{ws}
\textbf{u}(x)=\textbf{A}(x,\varepsilon)+\textbf{v}(x),\end{equation}
where ${\bf A}$ is the symmetric extension of the boundary value ${\bf a}$ constructed in the previous section (see Lemma\,\ref{infl}).

\begin{defi}\label{d}
Under a weak solution of problem \eqref{prad0} we understand a solenoidal vector field $\textbf{u}$ which is of the type \eqref{ws} with the symmetric vector field ${\bf v}\in W_{loc}^{1,2} (\overline{\Omega})$ satisfying the following conditions
$$\begin{array}{l}
\div {\bf  v}=0 \ {\rm in} \ \Omega, \quad {\bf v}=0 \ {\rm on} \ \partial\Omega,
\end{array}
$$
and the integral identity
\begin{equation}\label{ws3}\begin{array}{l}
\nu\int\limits_{\Omega} \nabla {\bf v} : \nabla\bfeta \, dx -
\int\limits_{\Omega} (({\bf A}+{\bf v})\cdot\nabla)\bfeta\cdot{\bf v} \, dx
-\int\limits_{\Omega} ({\bf v}\cdot\nabla)\bfeta\cdot{\bf A} \,
dx\\
\\
=\int\limits_{\Omega} ({\bf A}\cdot\nabla)\bfeta\cdot{\bf A} \, dx -
\nu\int\limits_{\Omega} \nabla {\bf A} : \nabla\bfeta \, dx+\int\limits_{\Omega}{\bf f}\cdot \bfeta\,dx \ \ \ \
\ \forall \bfeta\in J^\infty_{0}(\Omega).
\end{array}\end{equation}
\end{defi}
Then we have 
\begin{theo}\label{th}
Suppose that $\Omega\subset\sR^2$ is an unbounded domain symmetric with respect to the $x_1$ axis and each $\Gamma_i, \ \ i=0,...,N,$ intersects the $x_1$ axis.  Assume that the boundary value $\textbf{a}$ is a symmetric field in $W^{1/2,2}(\partial\Omega)$ having a compact support. Let ${\bf f}$ be a distribution which is symmetric in the sense that
\begin{equation}\label{ch1}\begin{array}{l}
<{\bf f}, \bfeta>=<{\bf f}, \tilde{\bfeta}> \ \ \forall\bfeta\in J^{\infty}_0(\Omega)
\end{array}\end{equation}
($\tilde{\bfeta}$ denotes the symmetrization of $\bfeta$ as defined in \eqref{sym} for ${\bf A}$) and such that
\begin{equation*}\begin{array}{l}
{\bf f}\in H^*(\Omega_k) \ \ \forall k \ \ {\rm and} \ \
\|{\bf f}\|_*=\sup\limits_{k\geq 1}\bigg(\Big(1+\int\limits_{R_0}^{R_k}\dfrac{dx_1}{g^3(x_1)}\Big)^{-1/2}\,\|{\bf f}\|_{H^{*}(\Omega_k)}\bigg)<+\infty.
\end{array}\end{equation*}
If either
\begin{enumerate}
\item[(i)] $
\int\limits_{R_0}^{+\infty}\dfrac{dx_1}{g^3(x_1)}<+\infty
$

{\rm or}
\item[(ii)] $
\int\limits_{R_0}^{+\infty}\dfrac{dx_1}{g^3(x_1)}=+\infty \ \ {\rm and} \ \ D=D^{(out)},
$
\end{enumerate}
then the problem \eqref{prad0} admits a weak solution ${\bf u}={\bf A}+{\bf v}$ in the sense of the definition\,\ref{d}. In the case $(i)$ the weak solution ${\bf u}$ satisfies the estimate
\begin{equation}\label{te1}\begin{array}{l}
\int\limits_{\Omega}|\nabla\textbf{u}|^2\,dx
\leq c({\bf a}, \|{\bf f}\|_*)\, \bigg(1+\int\limits_{R_0}^{+\infty}\dfrac{dx_1}{g^3(x_1)}\bigg)\end{array}\end{equation}
and in the case $(ii)$
\begin{equation}\label{te}\begin{array}{l}
\int\limits_{\Omega_{k}}|\nabla\textbf{u}|^2\,dx
\leq c({\bf a}, \|{\bf f}\|_*)\, \bigg(1+\int\limits_{R_0}^{R_k}\dfrac{dx_1}{g^3(x_1)}\bigg),\end{array}\end{equation}
where $c({\bf a}, \|{\bf f}\|_*)=c\bigg(\|{\bf a}\|^2_{W^{1/2,\,2}(\partial\Omega)}+\| {\bf a}\|^4_{W^{1/2,\,2}(\partial\Omega)}\!+\!\|{\bf f}\|^2_*\bigg)$ and $c$ is independent of $k.$
\end{theo}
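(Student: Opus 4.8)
The plan is to follow the classical Leray–Hopf scheme adapted to the noncompact setting: substitute $\uu = \mathbf A + \vv$ so that $\vv$ solves a problem with homogeneous boundary data, set up a sequence of approximate problems on the bounded domains $\Omega_k$ via Galerkin's method, obtain a priori estimates for the Dirichlet integral of $\vv$ that are \emph{uniform} with respect to $k$ in case (i) and that grow like $1+\int_{R_0}^{R_k}g^{-3}\,dx_1$ in case (ii), and then pass to the limit $k\to\infty$. The symmetry is essential throughout: by restricting test functions to $J^\infty_{0,S}(\Omega_k)$ the Galerkin solutions are symmetric, hence satisfy the sharp Leray–Hopf inequalities \eqref{A} of Lemma~\ref{infl}, which is what absorbs the troublesome term $\int(\vv\cdot\nabla)\bfeta\cdot\mathbf A$.

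\textbf{A priori estimate.} First I would take $\bfeta=\vv$ (formally; rigorously one works in the Galerkin subspace) in the identity obtained from \eqref{ws3}, yielding
\begin{equation*}
\nu\int_{\Omega_k}|\nabla\vv|^2\,dx = \int_{\Omega_k}(\vv\cdot\nabla)\vv\cdot\mathbf A\,dx + \int_{\Omega_k}(\mathbf A\cdot\nabla)\vv\cdot\mathbf A\,dx - \nu\int_{\Omega_k}\nabla\mathbf A:\nabla\vv\,dx + \langle\mathbf f,\vv\rangle.
\end{equation*}
The first term on the right is bounded by $c\,\varepsilon\int_{\Omega_k}|\nabla\vv|^2$ via Lemma~\ref{infl}; choosing $\varepsilon$ so that $c\,\varepsilon\le\nu/2$ absorbs it into the left side. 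The terms involving $\mathbf A$ alone are estimated using the explicit bounds on $\mathbf A$: near the compact parts $\mathbf B_0$ is a fixed $W^{1,2}$ field controlled by $\|\mathbf a\|_{W^{1/2,2}(\partial\Omega)}$, while in the outlet $\mathbf b_\infty$ obeys $|\mathbf b_\infty|\le C(\varepsilon)/g(x_1)$, $|\nabla\mathbf b_\infty|\le C(\varepsilon)/g^2(x_1)$ from \eqref{inf1}, so that $\int_D|\mathbf A|^4\,dx$ and $\int_D|\nabla\mathbf A|^2\,dx$ are comparable to $\int_{R_0}^{R_k}g^{-3}\,dx_1$ (the measure element in the outlet scales like $g(x_1)\,dx_1$). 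The force term is handled by $|\langle\mathbf f,\vv\rangle|\le\|\mathbf f\|_{H^*(\Omega_k)}\|\nabla\vv\|_{L^2(\Omega_k)}\le\|\mathbf f\|_*(1+\int_{R_0}^{R_k}g^{-3})^{1/2}\|\nabla\vv\|_{L^2(\Omega_k)}$. Applying Young's inequality everywhere and collecting terms gives exactly the bound
\begin{equation*}
\int_{\Omega_k}|\nabla\vv|^2\,dx \le c\big(\|\mathbf a\|^2_{W^{1/2,2}(\partial\Omega)}+\|\mathbf a\|^4_{W^{1/2,2}(\partial\Omega)}+\|\mathbf f\|^2_*\big)\Big(1+\int_{R_0}^{R_k}\frac{dx_1}{g^3(x_1)}\Big),
\end{equation*}
with $c$ independent of $k$; combined with $\int_{\Omega_k}|\nabla\mathbf A|^2\,dx$ this produces \eqref{te1} and \eqref{te} for $\uu$.

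\textbf{Existence on $\Omega_k$ and limit passage.} For fixed $k$, existence of a symmetric weak solution $\vv^{(k)}\in H_S(\Omega_k)$ follows from the Galerkin method together with the a priori estimate above and a standard compactness/Leray–Schauder argument (the nonlinearity is compact by Rellich's theorem on the bounded domain $\Omega_k$); one extends $\vv^{(k)}$ by zero to all of $\Omega$. In case (i), the right-hand side of the estimate is bounded uniformly in $k$, so $\{\vv^{(k)}\}$ is bounded in $D(\Omega)$; a subsequence converges weakly in $D(\Omega)$ and strongly in $L^2_{loc}$, and one checks that the limit $\vv$ satisfies \eqref{ws3} for every $\bfeta\in J^\infty_0(\Omega)$ (using the symmetry reduction \eqref{ch1} to handle general, not necessarily symmetric, test functions), with $\int_\Omega|\nabla\uu|^2<\infty$ and \eqref{te1}. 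In case (ii) one has only local bounds $\int_{\Omega_m}|\nabla\vv^{(k)}|^2\le C_m$ for $k\ge m$; a diagonal subsequence argument gives $\vv^{(k)}\rightharpoonup\vv$ in $W^{1,2}_{loc}(\overline\Omega)$, and the limit again satisfies \eqref{ws3} and \eqref{te}. Here one uses that $D=D^{(out)}$ so that the geometry is fully explicit in the outlet and the Leray–Hopf inequalities \eqref{A}, in particular the local versions on $\omega_k$, remain valid.

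\textbf{Main obstacle.} The delicate point is not the limit passage but ensuring the constant $c\,\varepsilon$ in \eqref{A} can genuinely be made smaller than $\nu/2$ \emph{for the fixed} $\mathbf A$: shrinking $\varepsilon$ changes $\mathbf A$ itself (through $\xi$, $\mathbf b$, $\mathbf b_\infty$), so one must verify that the bound $|\int(\vv\cdot\nabla)\vv\cdot\mathbf A|\le c\,\varepsilon\int|\nabla\vv|^2$ holds with $c$ \emph{independent of $\varepsilon$} — which is precisely the content of Lemmas~\ref{binnl} and~\ref{infl} and is why the Hardy inequality (Lemma~\ref{hardy}) enters, exploiting that the symmetric $\vv$ vanishes on $x_2=0$. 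A secondary technical issue is the rigorous justification of the energy identity for the Galerkin approximations and the proof that weak limits inherit the integral identity against the full space $J^\infty_0(\Omega)$ rather than only against symmetric test functions; this is resolved by \eqref{ch1} and the linearity of the symmetrization operator.
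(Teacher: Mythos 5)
Your argument is fine for case (i) and there it coincides with the paper's proof (the paper uses a Leray--Schauder fixed point on $H_S(\Omega_l)$ rather than Galerkin, but that is immaterial; the energy identity, the absorption of $\int(\vv\cdot\nabla)\vv\cdot{\bf A}$ via Lemma~\ref{infl} with $\varepsilon$ fixed so that $c\,\varepsilon\le\nu/4$, and the computation of $\|\nabla{\bf A}\|^2_{L^2(\Omega_l)}$, $\|{\bf A}\|^4_{L^4(\Omega_l)}$ in terms of $\int_{R_0}^{R_l}g^{-3}\,dx_1$ are exactly \eqref{iden}--\eqref{XX3}). The gap is in case (ii). You assert ``local bounds $\int_{\Omega_m}|\nabla\vv^{(k)}|^2\le C_m$ for $k\ge m$'' and then diagonalize, but these bounds do not follow from anything you have established: the energy estimate \eqref{XX3} only gives $\int_{\Omega_m}|\nabla\vv^{(l)}|^2\le\int_{\Omega_l}|\nabla\vv^{(l)}|^2\le c\,(1+\int_{R_0}^{R_l}g^{-3}\,dx_1)$, whose right-hand side diverges as $l\to+\infty$ when $\int_{R_0}^{+\infty}g^{-3}\,dx_1=+\infty$. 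Nothing a priori prevents the Dirichlet energy of $\vv^{(l)}$ from accumulating near the fixed subdomain $\Omega_m$, so the uniform-in-$l$ local bound is precisely what must be proved, and it is the main content of the paper's proof in case (ii).

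The paper obtains it by Saint--Venant type (Ladyzhenskaya--Solonnikov) estimates: one tests \eqref{iv} with the truncation ${\bf U}^{(l)}_k$ of \eqref{p225}, built from a cut-off $\theta_k$ with $|\nabla\theta_k|\le c/g(R_k)$ and a divergence corrector $\widehat{\vv}^{(l)}_k$ solving \eqref{p2525} on $\omega_k$ with the $k$-independent bound \eqref{dueh} (Lemma~\ref{dulgg}, which is where the hypothesis $D=D^{(out)}$ and the specific choice $R_{k+1}-R_k=g(R_k)/2L$ are used, via a rescaling to a uniformly star-shaped domain). Together with the local Leray--Hopf inequality on $\omega_k$ (the second estimate in \eqref{A}), the Poincar\'e inequality \eqref{poincareg} and Lemma~\ref{ladg}, this yields the recursive differential inequality \eqref{XX4} for $y_k=\int_{\Omega_k}|\nabla\vv^{(l)}|^2dx$, and a backward induction (the Claim, using $y_l\le Q_l$ from \eqref{XX3} as the starting point) gives $y_k\le Q_k$ uniformly in $l$, i.e.\ \eqref{final}. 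Only then does your diagonal extraction and the passage to the limit in \eqref{iv} go through and produce \eqref{te}. Incidentally, the issue you flag as the ``main obstacle'' --- that shrinking $\varepsilon$ changes ${\bf A}$ --- is real but already disposed of by the $\varepsilon$-independence of $c$ in Lemmas~\ref{binnl} and~\ref{infl}; the genuinely delicate step in this theorem is the one you skipped.
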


\begin{remark}
In the equality \eqref{ws3} and in what follows we kept for simplicity the notation of $<{\bf f}, \bfeta>$ as an integral. One should also notice that due to the symmetry assumptions on ${\bf A},$ ${\bf v},$ ${\bf f}$ the equality \eqref{ws3} will hold as soon as it holds for any $\bfeta\in J^\infty_{0,S}(\Omega),$ i.e. for $\bfeta\in J^\infty_{0}(\Omega)$ which is symmetric.
\end{remark}
In order to prove the existence of at least one weak solution we need some classical results.

\begin{lem}\label{ls} (Leray-Schauder theorem). Let $V$ be a Hilbert space and $\mathcal{A}: V\rightarrow V$ be a nonlinear compact  operator. If the norms of all possible solutions of the operator equation $$u^{(\lambda)}=\lambda\mathcal{A}u^{(\lambda)}, \ \ \lambda\in [0,1],$$
are bounded with the same constant $c$ independent of $\lambda,$ i.e.,
$$\|u^{(\lambda)}\|_{V}\leq c \ \ \forall \lambda\in [0,1],$$
then the operator equation
$$u=\mathcal{A}u$$
has at least one solution $u\in V$ (see, for example, \cite{Lad}).
\end{lem}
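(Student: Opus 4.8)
The plan is to deduce this statement from the Schauder fixed point theorem by combining the a priori bound with a radial retraction onto a fixed ball. First I would fix a radius $R>c$, say $R=c+1$, and introduce the continuous retraction $\rho\colon V\to \overline{B}_R$ of $V$ onto the closed ball $\overline{B}_R=\{u\in V:\|u\|_V\le R\}$, defined by $\rho(u)=u$ for $\|u\|_V\le R$ and $\rho(u)=Ru/\|u\|_V$ for $\|u\|_V>R$. Since $\mathcal{A}$ is compact (continuous and carrying bounded sets into relatively compact ones) and $\rho$ is continuous with range in the bounded set $\overline{B}_R$, the composition $T=\rho\circ\mathcal{A}$ is a continuous map of the closed, bounded, convex set $\overline{B}_R$ into a relatively compact subset of itself. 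Hence the Schauder fixed point theorem applies and yields a point $u_*\in\overline{B}_R$ with $u_*=\rho(\mathcal{A}u_*)$.

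The second step is to show that this fixed point of $T$ is in fact a genuine fixed point of $\mathcal{A}$, i.e.\ that the retraction acts trivially on $\mathcal{A}u_*$. Suppose, to the contrary, that $\|\mathcal{A}u_*\|_V>R$. Then by the definition of $\rho$ one has $u_*=R\,\mathcal{A}u_*/\|\mathcal{A}u_*\|_V$, so that $\|u_*\|_V=R$ and $u_*=\lambda\,\mathcal{A}u_*$ with $\lambda=R/\|\mathcal{A}u_*\|_V\in(0,1)$. Thus $u_*$ is one of the solutions of the family $u^{(\lambda)}=\lambda\mathcal{A}u^{(\lambda)}$, $\lambda\in[0,1]$, and the hypothesis forces $\|u_*\|_V\le c$. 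This contradicts $\|u_*\|_V=R>c$. Therefore $\|\mathcal{A}u_*\|_V\le R$, whence $\rho(\mathcal{A}u_*)=\mathcal{A}u_*$ and $u_*=\mathcal{A}u_*$, which is the assertion.

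The heart of the argument is the a priori bound: it is exactly what prevents the Schauder fixed point from escaping to the boundary sphere $\|u\|_V=R$ through the nonlinear retraction, and it is the only place where the assumption on the family $\{u^{(\lambda)}\}$ enters. The one external ingredient I would take for granted is the Schauder fixed point theorem itself; the main obstacle, were one to insist on a fully self-contained proof, is precisely its justification, which proceeds by approximating $T$ by finite-rank maps (using relative compactness of $T(\overline{B}_R)$ via Schauder projections onto finite-dimensional subspaces), invoking Brouwer's fixed point theorem in each finite-dimensional ball, and passing to the limit. An equivalent route, slightly more technical to set up but conceptually transparent, is via the Leray--Schauder degree: the a priori bound guarantees that $u-\lambda\mathcal{A}u\neq 0$ on $\partial B_R$ for every $\lambda\in[0,1]$, so the degree $\deg(I-\lambda\mathcal{A},B_R,0)$ is defined and homotopy invariant in $\lambda$; its value at $\lambda=0$ is $\deg(I,B_R,0)=1$, whence $\deg(I-\mathcal{A},B_R,0)=1\neq 0$ and a solution of $u=\mathcal{A}u$ exists in $B_R$.
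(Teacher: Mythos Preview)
Your argument is correct: the radial retraction combined with Schauder's fixed point theorem, followed by the contradiction using the a priori bound, is a standard and complete proof of this version of the Leray--Schauder principle. The alternative degree-theoretic route you sketch is equally valid.

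There is, however, nothing to compare with in the paper: the authors do not prove this lemma at all. They state it as a classical result and refer the reader to Ladyzhenskaya's monograph \cite{Lad}. So your proposal supplies a proof where the paper simply invokes the literature; in that sense your contribution goes beyond what the paper does, rather than paralleling or diverging from it.
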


\begin{lem}\label{poincare} {\sl (Poincar\'{e} inequality).} 
Let $u \in W^{1,2}_{loc}(\overline{\Omega}),$ $u\big|_{\partial\Omega}=0.$ Then the following inequality
\begin{equation}\label{poincareg}\begin{array}{l}
\int\limits_{\omega_k}|u(x)|^2\,dx\leq c\,g^2 (R_k)\int\limits_{\omega_k}|\nabla u(x)|^2\,dx, 
\end{array}
\end{equation}
holds, where the constant $c$ is independent of $u$ and $k.$
\end{lem}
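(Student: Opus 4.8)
The plan is to prove \eqref{poincareg} by rescaling each slab $\omega_k$ to a domain of unit size, where an elementary one-dimensional Poincar\'e estimate holds with a constant independent of $k$, and then to transfer the inequality back while tracking the powers of $g(R_k)$ produced by the change of variables. Since $R_k\to+\infty$, only finitely many indices $k$ are such that $\omega_k$ meets the bounded part $\Omega_0$; for each of those, $\omega_k$ is a fixed bounded Lipschitz domain on whose boundary $u$ vanishes on a set of positive measure, so Lemma~\ref{hardy} (or the classical Poincar\'e inequality) already gives \eqref{poincareg} with some finite constant. Hence I may assume $k$ large enough that $\omega_k\subset D\subset D^{(out)}$, i.e.\ $\omega_k$ is the portion of the outlet lying between the cross sections $x_1=R_k$ and $x_1=R_{k+1}$.

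First I would introduce the rescaled coordinates $y_1=(x_1-R_k)/g(R_k)$, $y_2=x_2/g(R_k)$ and set $\hat u(y)=u(x)$. Because $R_{k+1}-R_k=g(R_k)/(2L)$ and, by \eqref{g5}, $\tfrac12 g(R_k)\le g(x_1)\le\tfrac32 g(R_k)$ on $[R_k,R_{k+1}]$, the image $\hat\omega_k$ of $\omega_k$ is contained in the fixed rectangle $[0,\tfrac{1}{2L}]\times[-\tfrac32,\tfrac32]$, uniformly in $k$. Writing the outlet in the form $\{|x_2|<g_D(x_1)\}$ with $\tfrac{\gamma}{\gamma+1}g\le g_D\le g$, the rescaled outlet becomes the region between the graphs $y_2=\pm\hat g_k(y_1)$, where $\hat g_k(y_1)=g_D(R_k+g(R_k)y_1)/g(R_k)\le\tfrac32$, and $\hat u$ vanishes on the lower graph since $u|_{\partial\Omega}=0$.

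Next I would run the fundamental theorem of calculus in the $y_2$ variable starting from the lower boundary: for a.e.\ fixed $y_1$,
\begin{equation*}
\hat u(y_1,y_2)=\int_{-\hat g_k(y_1)}^{y_2}\frac{\partial\hat u}{\partial y_2}(y_1,s)\,ds,
\end{equation*}
so that, by Cauchy--Schwarz and the height bound $y_2+\hat g_k(y_1)\le 3$,
\begin{equation*}
|\hat u(y_1,y_2)|^2\le 3\int_{-\hat g_k(y_1)}^{\hat g_k(y_1)}\Big|\frac{\partial\hat u}{\partial y_2}(y_1,s)\Big|^2\,ds.
\end{equation*}
Integrating in $y_2$ over the cross section (of length $2\hat g_k(y_1)\le3$) and then in $y_1$ yields the rescaled Poincar\'e inequality $\int_{\hat\omega_k}|\hat u|^2\,dy\le 9\int_{\hat\omega_k}|\nabla_y\hat u|^2\,dy$, with the explicit constant $9$ independent of $k$. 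Finally, the change of variables gives $dx=g(R_k)^2\,dy$ and $\nabla_x u=g(R_k)^{-1}\nabla_y\hat u$, whence $\int_{\omega_k}|u|^2\,dx=g(R_k)^2\int_{\hat\omega_k}|\hat u|^2\,dy$ and $\int_{\omega_k}|\nabla u|^2\,dx=\int_{\hat\omega_k}|\nabla_y\hat u|^2\,dy$; substituting into the rescaled inequality produces exactly \eqref{poincareg} with $c=9$. Taking the maximum of $9$ and the finitely many constants from the bounded slabs gives a single $c$ independent of $u$ and $k$.

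The only delicate point is the uniformity of the Poincar\'e constant across the infinitely many domains $\omega_k$, whose diameters grow with $k$. The rescaling concentrates this difficulty into the claim that the normalized slabs $\hat\omega_k$ have uniformly controlled geometry, and here \eqref{g5} together with $R_{k+1}-R_k=g(R_k)/(2L)$ does the essential work by forcing both the length and the height of $\hat\omega_k$ to remain of order one. I deliberately use the fundamental theorem of calculus rather than an abstract Poincar\'e inequality, so that no regularity of the boundary graph $g_D$ is required and the constant is manifestly $k$-independent; the argument uses only that $\omega_k$ is a region between two graphs of height $\le\tfrac32 g(R_k)$ on whose lower graph $u$ vanishes.
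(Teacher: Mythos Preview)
Your proof is correct and follows exactly the route the paper indicates: the paper's own ``proof'' consists solely of the sentence ``For the proof of this lemma recall \eqref{g5}'' together with a restatement of \eqref{g5}, and your argument is precisely the natural fleshing-out of that hint---use $\tfrac12 g(R_k)\le g(t)\le\tfrac32 g(R_k)$ to bound the vertical extent of $\omega_k$ uniformly by $3g(R_k)$, then apply the one-dimensional Poincar\'e inequality in the $x_2$-direction. The rescaling step is a convenient bookkeeping device but is not strictly needed (one may integrate directly from the lower lateral boundary after extending $u$ by zero into $D^{(out)}\setminus D$), and your separate treatment of small $k$ is harmless though in fact every $\omega_k$ already lies in the outlet.
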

For the proof of this lemma recall \eqref{g5}:
\begin{equation*}\begin{array}{l}
\dfrac{1}{2}g(R_k)\leq g(t)\leq \dfrac{3}{2}g(R_k), \ \ t\in [R_k, \ R_{k+1}].
\end{array}\end{equation*}

\begin{lem}\label{ladg}
Let $u \in W^{1,2}_{loc}(\overline{\Omega}),$ $u\big|_{\partial\Omega}=0.$ Then the following inequality
\begin{equation}\label{lad1g}\begin{array}{l}
\|u\|_{L^4(\omega_k)}\leq c\,g^{1/2}(R_k)\|\nabla u\|_{L^2(\omega_k)},
\end{array}
\end{equation}
holds, where the constant $c$ is independent of $u$ and $k.$
\end{lem}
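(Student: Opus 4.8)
The plan is to prove the $L^4$-estimate \eqref{lad1g} by a standard scaling argument, reducing it to the classical Ladyzhenskaya inequality on a fixed reference domain. First I would fix $k$ and introduce the horizontal rescaling $y_1 = (x_1 - R_k)/g(R_k)$, $y_2 = x_2/g(R_k)$, which maps $\omega_k$ onto a domain $\widehat{\omega}_k$ whose size and shape are controlled independently of $k$: indeed $R_{k+1} - R_k = g(R_k)/(2L)$ so the $y_1$-extent is exactly $1/(2L)$, while by \eqref{g5} the cross-section heights $g(t)$ for $t \in [R_k, R_{k+1}]$ lie between $\tfrac12 g(R_k)$ and $\tfrac32 g(R_k)$, so after rescaling the heights lie in $[\tfrac12, \tfrac32]$. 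Hence all the $\widehat{\omega}_k$ are uniformly bounded Lipschitz domains (with a uniform Lipschitz constant coming from $L$), and each contains a piece of the $y_1$-axis on which the rescaled function $\widehat{u}(y) = u(x)$ vanishes (since $u|_{\partial\Omega}=0$ and each $\Gamma_i$ — in particular the relevant boundary arcs — touches the $x_1$-axis, the portion of $\partial\widehat\omega_k$ of positive measure on which $\widehat u = 0$ is bounded below).

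Next I would invoke the Ladyzhenskaya (Gagliardo–Nirenberg) inequality in 2D on this fixed family of domains: there is a constant $c$, uniform over the family $\{\widehat\omega_k\}$ by their uniform geometry, such that $\|\widehat u\|_{L^4(\widehat\omega_k)} \le c\,\|\widehat u\|_{L^2(\widehat\omega_k)}^{1/2}\,\|\nabla_y \widehat u\|_{L^2(\widehat\omega_k)}^{1/2} + c\,\|\widehat u\|_{L^2(\widehat\omega_k)}$, and then — using that $\widehat u$ vanishes on a uniformly large portion of the boundary — absorb the lower-order term via Poincaré on $\widehat\omega_k$ to get the clean form $\|\widehat u\|_{L^4(\widehat\omega_k)} \le c\,\|\widehat u\|_{L^2(\widehat\omega_k)}^{1/2}\,\|\nabla_y \widehat u\|_{L^2(\widehat\omega_k)}^{1/2}$. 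Equivalently one may combine this with the rescaled Poincaré inequality $\|\widehat u\|_{L^2(\widehat\omega_k)} \le c\,\|\nabla_y\widehat u\|_{L^2(\widehat\omega_k)}$ (which is Lemma \ref{poincare} written in $y$-variables) to reach $\|\widehat u\|_{L^4(\widehat\omega_k)} \le c\,\|\nabla_y \widehat u\|_{L^2(\widehat\omega_k)}$.

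Finally I would undo the change of variables. Under $x = g(R_k)\,y + (R_k,0)$ one has $dx = g^2(R_k)\,dy$ and $\nabla_x = g(R_k)^{-1}\nabla_y$, so $\|u\|_{L^4(\omega_k)} = g^{1/2}(R_k)\,\|\widehat u\|_{L^4(\widehat\omega_k)}$ and $\|\nabla_x u\|_{L^2(\omega_k)} = \|\nabla_y \widehat u\|_{L^2(\widehat\omega_k)}$ (the power of $g$ in the $L^2$-gradient norm cancels: $g^2$ from $dx$ against $g^{-2}$ from $|\nabla_x u|^2$). Substituting these into the inequality on $\widehat\omega_k$ gives exactly \eqref{lad1g} with a constant $c$ independent of $u$ and $k$.

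The only real point requiring care — the main obstacle — is the uniformity of the constant in the reference-domain Ladyzhenskaya/Poincaré inequalities across the whole family $\{\widehat\omega_k\}$. This is where the hypotheses on $g$ ($g'$ bounded, Lipschitz constant $L$) and Remark \ref{remark1} do the work: they guarantee that the $\widehat\omega_k$ form a uniformly Lipschitz family of domains of uniformly bounded diameter, bounded away from degeneracy (heights bounded below by $\tfrac12$), and each with a uniformly large flat piece of boundary on the $y_1$-axis where $\widehat u$ vanishes. One should also note the subtlety that $\omega_k$ may partly coincide with the bounded region $\Omega_0$ for small $k$; but there are only finitely many such $k$, each gives a fixed domain, and for those one simply takes the (finitely many) individual constants, so the uniform bound persists.
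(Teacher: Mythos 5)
Your argument is correct in substance and rests on the same two ingredients as the paper's proof --- the two-dimensional multiplicative (Ladyzhenskaya) inequality $\|u\|_{L^4(\omega_k)}\le c\,\|\nabla u\|_{L^2(\omega_k)}^{1/2}\|u\|_{L^2(\omega_k)}^{1/2}$ and the Poincar\'e inequality \eqref{poincareg} together with \eqref{g5} --- but you organize them differently: the paper simply asserts that the constant in the multiplicative inequality is independent of $k$ and then substitutes \eqref{poincareg}, whereas you justify that uniformity explicitly by rescaling $\omega_k$ by the factor $g(R_k)$ onto a reference family $\widehat\omega_k$ of uniformly bounded, uniformly Lipschitz domains and applying Ladyzhenskaya--Poincar\'e there. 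This is precisely the device the paper itself deploys later in the proof of Lemma~\ref{dulgg}, so your version is, if anything, more self-contained; your bookkeeping of the scaling factors ($\|u\|_{L^4(\omega_k)}=g^{1/2}(R_k)\,\|\widehat u\|_{L^4(\widehat\omega_k)}$ and $\|\nabla_x u\|_{L^2(\omega_k)}=\|\nabla_y\widehat u\|_{L^2(\widehat\omega_k)}$) is correct and yields \eqref{lad1g}.

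One point does need correcting: $\widehat u$ does \emph{not} vanish on the $y_1$-axis. The $x_1$-axis lies in the \emph{interior} of $\omega_k$ (the sets $\omega_k$ are cross-sectional slices of the outlet $D$, far from the inner boundaries $\Gamma_i$), and a scalar $u\in W^{1,2}_{loc}(\overline{\Omega})$ with $u|_{\partial\Omega}=0$ has no reason to vanish at $x_2=0$; the symmetry condition \eqref{ss} forces only the second component of a symmetric vector field to vanish there, and that is not assumed here. What you should use instead is that $u$ vanishes on $\partial\Omega\cap\partial\omega_k$, i.e.\ on the upper and lower lateral boundaries $x_2=\pm g(x_1)$ of the outlet, whose images in $\widehat\omega_k$ are boundary arcs of length bounded below uniformly in $k$ by \eqref{g5}; this is exactly what gives the uniform Poincar\'e constant on the reference domains, and the rest of your argument goes through unchanged. (Your caveat about $\omega_k$ overlapping $\Omega_0$ for small $k$ is moot: by definition $\omega_k=\Omega_{k+1}\setminus\overline{\Omega}_k\subset D$ for every $k\ge 0$.)
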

\begin{proof}The proof of this lemma follows directly from the following inequality
\begin{equation}\label{mul}
\|u\|_{L^4(\omega_k)}\leq c \|\nabla u\|^{1/2}_{L^2(\omega_k)}\cdot\| u\|^{1/2}_{L^2(\omega_k)},
\end{equation}
the estimates \eqref{g5} and the Poincar\'{e} inequality \eqref{poincareg}. The constant $c$  in \eqref{mul} is independent of $k$.
\end{proof}

\begin{lem}\label{dulgg} Suppose that $D=D^{(out)},$ i.e.
$\omega_k=\{x: R_k<x_1<R_{k+1}, \ |x_2|<g(x_1) \}.$
Let $f\in L^2(\omega_k)$ and 
$$\begin{array}{rcl}
\int\limits_{\omega_k}f\,dx=0.
\end{array}$$
Then the problem
\begin{equation}\label{du}\left\{\begin{array}{rcl}
\div\, \textbf{u} & = & f \ \ {\rm in} \ \omega_k,\\
\textbf{u} & = & 0 \ \ {\rm on} \ \partial\omega_k
\end{array}\right\}\end{equation}
admits a solution $\textbf{u}\in W^{1,2}_0 (\omega_k)$ satisfying the estimate
\begin{equation}\label{dueh}
\|\nabla \textbf{u}\|_{L^2(\omega_k)}\leq c\,\| f \|_{L^2(\omega_k)}\end{equation}
with the constant $c$ independent of $\textbf{u}, \ f$ and $k.$
\end{lem}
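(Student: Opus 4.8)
\emph{Plan.} The idea is to reduce \eqref{du} on the curved cell $\omega_k$ to the standard divergence problem on a \emph{fixed} reference domain by rescaling with the factor $g(R_k)$, and then to invoke the classical solvability of the divergence equation in a bounded Lipschitz domain (see, e.g., \cite{Lad}, or Lemma~\ref{extension} in the present paper). First I would introduce the affine change of variables $y=(y_1,y_2)$ with $y_1=(x_1-R_k)/g(R_k)$, $y_2=x_2/g(R_k)$, which maps $\omega_k$ onto
$$\widehat\omega_k=\Big\{y:\ 0<y_1<\tfrac{1}{2L},\ \ |y_2|<\widehat g_k(y_1)\Big\},\qquad \widehat g_k(y_1):=\frac{g\big(R_k+g(R_k)\,y_1\big)}{g(R_k)}.$$
By Remark~\ref{remark1} (that is, \eqref{g5}) one has $\tfrac12\le\widehat g_k\le\tfrac32$ on $[0,\tfrac1{2L}]$, while the Lipschitz assumption on $g$ gives $|\widehat g_k(y_1)-\widehat g_k(y_1')|\le L\,|y_1-y_1'|$. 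Hence every $\widehat\omega_k$ is contained in the fixed rectangle $(0,\tfrac1{2L})\times(-\tfrac32,\tfrac32)$ and contains a fixed ball, its boundary being the union of two graphs with Lipschitz constant $\le L$ and two fixed vertical segments. Consequently the whole family $\{\widehat\omega_k\}_{k}$ consists of bounded Lipschitz (in fact John) domains whose geometric characters — Lipschitz/John constant and diameter-to-inradius ratio — are bounded by quantities depending only on $L$, and not on $k$.

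Next I would transport the data. For the given $f$, set $\widehat f(y)=g(R_k)\,f\big(g(R_k)y_1+R_k,\,g(R_k)y_2\big)$; the change of variables yields $\int_{\widehat\omega_k}\widehat f\,dy=g(R_k)^{-1}\int_{\omega_k}f\,dx=0$ and $\|\widehat f\|_{L^2(\widehat\omega_k)}=\|f\|_{L^2(\omega_k)}$. Applying on $\widehat\omega_k$ the classical result on the divergence problem in a bounded Lipschitz domain, one obtains $\widehat{\textbf u}\in W^{1,2}_0(\widehat\omega_k)$ with $\div\widehat{\textbf u}=\widehat f$ and $\|\nabla\widehat{\textbf u}\|_{L^2(\widehat\omega_k)}\le\widehat c\,\|\widehat f\|_{L^2(\widehat\omega_k)}$, where $\widehat c$ depends only on the geometric character of $\widehat\omega_k$, hence only on $L$. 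Finally, pulling back, I would define $\textbf u(x)=\widehat{\textbf u}\big((x_1-R_k)/g(R_k),\,x_2/g(R_k)\big)$. Then $\textbf u\in W^{1,2}_0(\omega_k)$, and since $\div_x\textbf u(x)=g(R_k)^{-1}(\div_y\widehat{\textbf u})(y)=g(R_k)^{-1}\widehat f(y)=f(x)$ while $\nabla_x\textbf u=g(R_k)^{-1}(\nabla_y\widehat{\textbf u})$ and $dx=g(R_k)^2\,dy$, one gets $\|\nabla\textbf u\|_{L^2(\omega_k)}=\|\nabla\widehat{\textbf u}\|_{L^2(\widehat\omega_k)}\le\widehat c\,\|\widehat f\|_{L^2(\widehat\omega_k)}=\widehat c\,\|f\|_{L^2(\omega_k)}$, which is \eqref{dueh} with $c=\widehat c$ independent of $k$.

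\emph{Main obstacle.} The only genuinely delicate point is the $k$-independence of the constant $c$: it requires knowing that the Bogovskii-type bound for the divergence equation is governed purely by the geometric character (Lipschitz/John constant and bounded aspect ratio) of the domain, uniformly over the family $\{\widehat\omega_k\}$ — e.g. via a single covering of the reference rectangle by finitely many subdomains each star-shaped with respect to a ball, with constants depending only on $L$. This is exactly where the definition $R_{k+1}=R_k+g(R_k)/(2L)$ together with Remark~\ref{remark1} enters in an essential way: after the natural scaling by $g(R_k)$, all cells $\omega_k$ become, up to a uniform Lipschitz deformation, a fixed rectangle, so a single uniform geometric setup controls them all at once. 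Everything else in the argument is a routine change-of-variables computation.
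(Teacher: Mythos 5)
Your proposal is correct and follows essentially the same route as the paper: rescale $\omega_k$ by the factor $g(R_k)$ to a reference cell of fixed size (exploiting $R_{k+1}-R_k=g(R_k)/(2L)$ and \eqref{g5}), solve the divergence equation there with a constant depending only on $L$, and pull back, with scaling computations that match the paper's exactly. The only difference is in how the uniformity of the constant is justified: where you appeal to the uniform Lipschitz/John character of the rescaled cells, the paper normalizes by $2L/g(R_k)$ instead and shows explicitly that each rescaled cell is star-shaped with respect to every point of a fixed triangle (hence with respect to a fixed ball), which is precisely the hypothesis under which the cited result of \cite{LadSol1} gives a constant depending only on the geometry.
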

\begin{remark}
{\rm In \cite{PilSol} the family of the domains $\omega_k$ was chosen in a special way in order to have solutions of the problem \eqref{du} satisfying the estimates \eqref{dueh} with a constant $c$ independent of $k.$ Below we give a detailed proof of that fact.}
\end{remark}
\begin{proof}
Recall that $R_{k+1}-R_k=\dfrac{g(R_k)}{2L}$ and $L$ is the Lipschitz constant of $g.$ Consider the transformation $F$ defined by
$$
y=(y_1, y_2)=F(x)=\bigg(\dfrac{2L\,(x_1-R_k)}{g(R_k)}, \, \dfrac{2L\,x_2}{g(R_k)}\bigg).
$$
Through this transformation $\omega_k$ is transformed into a domain $F(\omega_k)$ such that
$$
0\leq y_1=\dfrac{2L\,(x_1-R_k)}{g(R_k)}\leq \dfrac{2L\,(R_{k+1}-R_k)}{g(R_k)}=1,$$
$$
|y_2|\leq\dfrac{2L\,g(x_1)}{g(R_k)}=\dfrac{2L\,(g(x_1)-g(R_k)+g(R_k))}{g(R_k)}\leq 2L\,\bigg(\dfrac{L\,(R_{k+1}-R_k)}{g(R_k)}+1\bigg)=3L.$$
Moreover, the upper and the lower boundary of $F(\omega_k)$ is given by $\pm$ the graph of the function $h_k$ defined as
$$
h_k(y_1)=\dfrac{2L}{g(R_k)}\,g\Big(\dfrac{g(R_k)}{2L}\,y_1+R_k\Big), \ \ y_1\in (0,1).
$$
Note that $h_k$ satisfies
$$
|h_k(y_1)-h_k(y'_1)|\leq L|y_1-y'_1| \ \ \ \forall y_1, y'_1\in (0,1).
$$
Since $h_k(0)=2L$ it is clear that the graph of $h_k$ (resp. $-h_k$) is contained in the triangle $A^+B^+C^+$ (resp.  $A^-B^-C^-$)  (see Fig.\,3).
\begin{figure}[ht!]
\centering
\includegraphics[scale=0.9]{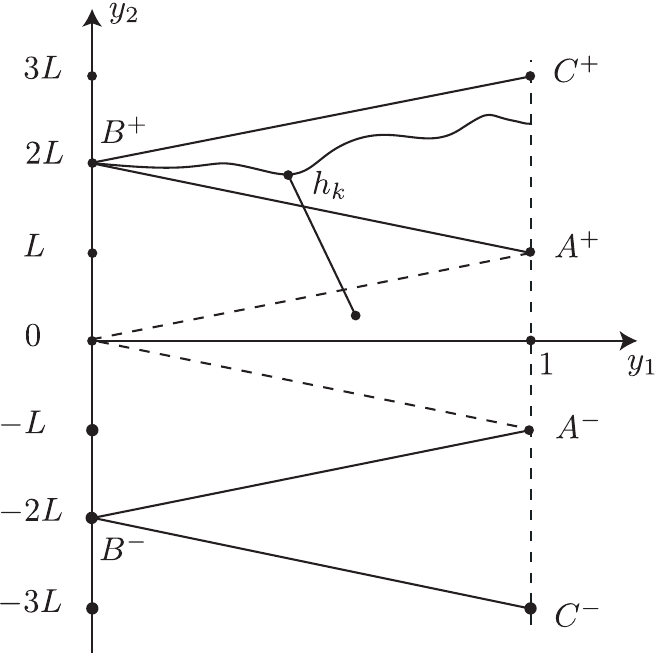}
\caption{Transformed domain}
\end{figure}
Any straight line joining a point of the triangle $A^-OA^+$ (notice that $O=(0,0)$) to the graph of $\pm h_k$ will necessarily have a slope larger than $L$ and thus $F(\omega_k)$ is a star shaped domain with respect to any point of $A^-OA^+$ and bounded independently of $k.$  One has if $J_F(x)$ denotes the Jacobian determinant of $F$ and $F^{-1}$ the inverse of $F$
\begin{equation*}\begin{array}{l}
\int\limits_{F(\omega_k)}f(F^{-1}(y))\,dy=\int\limits_{\omega_k}f(x)\,|J_F(x)|\,dx=\bigg(\dfrac{2L}{g(R_k)}\bigg)^2\int\limits_{\omega_k}f(x)\,dx=0.
\end{array}\end{equation*}
Thus there exists ${\bf v}$ solution to
\begin{equation*}\left\{\begin{array}{rcl}
\div\, \textbf{v}(y) & = & \dfrac{g(R_k)}{2L}\,f(F^{-1}(y)) \ \ {\rm in} \ F(\omega_k),\\
\textbf{v}(y) & = & 0 \ \ {\rm on} \ \partial F(\omega_k)
\end{array}\right\}\end{equation*}
which satisfies (see \cite{LadSol1})
\begin{equation}\label{divv}
\|\nabla \textbf{v}\|_{L^2(F(\omega_k))}\leq c\,\| \dfrac{g(R_k)}{2L}\,f(F^{-1}(y)) \|_{L^2(F(\omega_k))},\end{equation}
where $c$ is independent of $k.$ Set
$${\bf u}(x)={\bf v}(F(x)).$$
One has the summation convention
\begin{equation*}\begin{array}{l}
\dfrac{\partial u_k(x)}{\partial x_i}=\sum\limits_{l=1}^2\dfrac{\partial v_k(F(x))}{\partial y_l}\cdot\dfrac{\partial y_l}{\partial x_i}=\dfrac{2L}{g(R_k)}\cdot\dfrac{\partial v_k(F(x))}{\partial y_i}.
\end{array}\end{equation*}
Thus ${\bf u}$ satisfies 
\begin{equation*}\left\{\begin{array}{rcl}
\div\,\textbf{u}(x) & = & \dfrac{2L}{g(R_k)}\,\div\,{\bf v}(F(x))=f(x) \ \ {\rm in} \ \omega_k,\\
\textbf{u}(x) & = & 0 \ \ {\rm on} \ \partial \omega_k.
\end{array}\right\}\end{equation*}
Moreover,
\begin{equation*}
\|\nabla \textbf{u}\|_{L^2(\omega_k)}=\dfrac{2L}{g(R_k)}\,\|\nabla {\bf v}(F(x)) \|_{L^2(\omega_k)}.
\end{equation*}
Since (see \eqref{divv})
\begin{equation*}\begin{array}{l}
\|\nabla {\bf v}(F(x)) \|^2_{L^2(\omega_k)}=\int\limits_{\omega_k}|\nabla {\bf v}(F(x))|^2dx=\int\limits_{F(\omega_k)}|\nabla {\bf v}(y) |^2 \Big(\dfrac{g(R_k)}{2L}\Big)^2dy\\
\\
\leq c\,\Big(\dfrac{g(R_k)}{2L}\Big)^4\int\limits_{F(\omega_k)}f^2(F^{-1}(y))dy=c\,\Big(\dfrac{g(R_k)}{2L}\Big)^2\int\limits_{\omega_k}f^2(x)\,dx
\end{array}\end{equation*}
the result follows.
\end{proof}
%This lemma was proved in \cite{PilSol}.
\vspace{10pt}
\textit{Proof of the Theorem\,\ref{th}:}
we construct a solution to \eqref{ws3} as limit of a sequence ${\bf v}^{(l)}\in H_S(\Omega_l),$ where ${\bf v}^{(l)}$ are solutions to 
\begin{equation}\label{iv}\begin{array}{l}
\nu\int\limits_{\Omega_l} \nabla {\bf v}^{(l)} : \nabla\bfeta \, dx -
\int\limits_{\Omega_l} (({\bf A}+{\bf v}^{(l)})\cdot\nabla)\bfeta\cdot{\bf v}^{(l)} \, dx
-\int\limits_{\Omega_l} ({\bf v}^{(l)}\cdot\nabla)\bfeta\cdot{\bf A} \,
dx\\
\\
=\int\limits_{\Omega_l} ({\bf A}\cdot\nabla)\bfeta\cdot{\bf A} \, dx -
\nu\int\limits_{\Omega_l} \nabla {\bf A} : \nabla\bfeta \, dx+\int\limits_{\Omega_l}{\bf f}\cdot \bfeta\,dx
\end{array}\end{equation}
for any test function $\bfeta\in H_S(\Omega_l).$
Due, for instance, to the Riesz representation theorem there exits a unique element ${\mathcal{A}}\,{\bf v}^{(l)}\in H_S(\Omega_l)$ such that  
\begin{equation*}\begin{array}{l}
\int\limits_{\Omega_l}\nabla\widehat{\mathcal{A}}\,{\bf v}^{(l)}:\nabla\bfeta\,dx  = \nu^{-1}\bigg(\int\limits_{\Omega_l}({\bf v}^{(l)}\cdot\nabla)\,\bfeta\cdot {\bf v}^{(l)}\,dx+\int\limits_{\Omega_l}({\bf A}\cdot\nabla)\,\bfeta\cdot {\bf v}^{(l)}\,dx\\
\\
+\int\limits_{\Omega_l}({\bf v}^{(l)}\cdot\nabla)\,\bfeta\cdot {\bf A}\,dx
+\int\limits_{\Omega_l}({\bf A}\cdot\nabla)\,\bfeta\cdot {\bf A}\,dx+\int\limits_{\Omega_l}{\bf f}\cdot\bfeta\,dx \bigg)-\int\limits_{\Omega_l}\nabla {\bf A}:\nabla\bfeta\,dx \ \ \forall\bfeta\in H_S(\Omega_l).
\end{array}\end{equation*}
The equation \eqref{iv} is equivalent to the operator equation 
\begin{equation}\label{oe}\begin{array}{l}
{\bf v}^{(l)}=\widehat{\mathcal{A}}\,{\bf v}^{(l)}.
\end{array}\end{equation}
It can be proved (see \cite{Lad}) that the operator
$\widehat{\mathcal{A}}:H_S(\Omega_l)\hookrightarrow H_S(\Omega_l)$ 
is compact and the solvability of the operator equations \eqref{oe} can be obtained by applying the Leray--Schauder Theorem. To do this we need to show that the norms of all possible solutions of the operator equations 
\begin{equation}\label{oe1}\begin{array}{l}
{\bf v}^{(l,\,\lambda)}=\lambda\widehat{\mathcal{A}}\,{\bf v}^{(l,\,\lambda)}, \ \ \ \lambda\in[0,\,1],
\end{array}\end{equation}
are bounded by a constant independent of $\lambda.$ Take $\bfeta={\bf v}^{(l,\,\lambda)}$ in \eqref{oe1}. This yields
\begin{equation}\label{iden}\begin{array}{rcl}
\nu\int\limits_{\Omega_l}|\nabla{\bf v}^{(l,\,\lambda)}|^2\,dx &=&\lambda\int\limits_{\Omega_l}({\bf A}\cdot\nabla)\,{\bf v}^{(l,\,\lambda)}\cdot {\bf A}\,dx
-\lambda\,\nu\int\limits_{\Omega_l}\nabla{\bf A}:\nabla {\bf v}^{(l,\,\lambda)}\,dx\\
\\
&+&\lambda\int\limits_{\Omega_l}{\bf f}\cdot {\bf v}^{(l,\,\lambda)}\,dx
+\lambda\int\limits_{\Omega_l}({\bf v}^{(l,\,\lambda)}\cdot\nabla)\,{\bf v}^{(l,\,\lambda)}\cdot{\bf A}\,dx.
\end{array}
\end{equation}
\\We estimate the first three terms of the right-hand side of \eqref{iden} by using the H{\"o}lder and the Cauchy inequalities, and to estimate the last term of \eqref{iden} we use the Leray--Hopf inequality \eqref{A}. We obtain
\begin{equation}\label{est1}\begin{array}{l}
\nu\int\limits_{\Omega_l}|\nabla{\bf v}^{(l,\,\lambda)}|^2dx \leq c\,\mu\int\limits_{\Omega_l}|\nabla{\bf v}^{(l,\,\lambda)}|^2\,dx\\
\\
+\dfrac{c}{\mu}\bigg(\int\limits_{\Omega_l}|\nabla{\bf A}|^2dx +\int\limits_{\Omega_l}|{\bf A}|^4dx+\|{\bf f}\|^2_{H^{*}(\Omega_l)}\bigg)+c(\mathbb{F}_1,...,\mathbb{F}_N)\,\varepsilon\int\limits_{\Omega_l}|\nabla{\bf v}^{(l,\,\lambda)}|^2\,dx.
\end{array}\end{equation}
Taking $\mu=\dfrac{\nu}{4\,c}$ and $\varepsilon=\dfrac{\nu}{4\,c(\mathbb{F}_1,...,\mathbb{F}_N)},$ we obtain
\begin{equation*}\begin{array}{l}
\dfrac{\nu}{2}\|\nabla {\bf v}^{(l,\,\lambda)}\|^2_{L^2(\Omega_l)}\leq c\bigg(\|\nabla {\bf A}\|^2_{L^2(\Omega_l)} + \| {\bf A}\|^4_{L^4(\Omega_l)} + \|{\bf f}\|^2_{H^{*}(\Omega_l)}\bigg).
\end{array}
\end{equation*}
Since $\varepsilon$ is now fixed, we have also (note that ${\rm supp}\,{\bf B}_0\subset\Omega_0$)
\begin{equation}\label{123}\begin{array}{l}
\|\nabla {\bf A} \|^2_{L^2(\Omega_l)}=\|\nabla {\bf B}_0+\nabla {\bf B}_{\infty}\|^2_{L^2(\Omega_l)}=\|\nabla {\bf B}_0\|^2_{L^2(\Omega_0)}+\|\nabla {\bf B}_{\infty}\|^2_{L^2(\Omega_l)}\\
\\
\leq c\,\|{\bf a}\|^2_{W^{1/2,2}(\partial\Omega)}\bigg(\int\limits_{\Omega_0}dx
+\int\limits_{\Omega_l}\dfrac{dx}{g^4(x_1)}\bigg)\\
\\
\leq c\,\|{\bf a}\|^2_{W^{1/2,2}(\partial\Omega)}\bigg({\rm meas}(\Omega_0)
+\int\limits_{R_0}^{R_l}\int\limits_{-g(x_1)}^{g(x_1)}\dfrac{dx_1\,dx_2}{g^4(x_1)}\bigg)\\
\\
\leq c\,\|{\bf a}\|^2_{W^{1/2,2}(\partial\Omega)}\,\bigg(1+\int\limits_{R_0}^{R_l}\dfrac{dx_1}{g^3(x_1)}\bigg).
\end{array}
\end{equation}
Similarly (see \eqref{inf1})

\begin{equation}\label{1234}\begin{array}{l}
\|{\bf A} \|^4_{L^4(\Omega_l)}\leq c\,\|{\bf a}\|^4_{W^{1/2,2}(\partial\Omega)}\,\bigg(1+\int\limits_{R_0}^{R_l}\dfrac{dx_1}{g^3(x_1)}\bigg).
\end{array}
\end{equation}
The constant $c$ in \eqref{123} and \eqref{1234} is independent of $l.$
\\Therefore, we obtain for all $0\leq\lambda\leq 1$
\begin{equation*}\begin{array}{l}
\|\nabla {\bf v}^{(l,\,\lambda)}\|^2_{L^2(\Omega_l)}\leq c({\bf a}, \|{\bf f}\|_*)\,\bigg(1+\int\limits_{R_0}^{R_l}\dfrac{dx_1}{g^3(x_1)}\bigg).
\end{array}
\end{equation*}
Hence, according to the Leray--Schauder Theorem each operator equation \eqref{oe} has at least one weak symmetric solution ${\bf v}^{(l)}\in H_S(\Omega_l).$ These solutions satisfy the integral identity \eqref{iv} and the inequality
\begin{equation}\label{XX3}\begin{array}{l}
\|\nabla {\bf v}^{(l)}\|^2_{L^2(\Omega_l)}\leq c({\bf a}, \|{\bf f}\|_*)\,\bigg(1\!+\!\int\limits_{R_0}^{R_l}\dfrac{dx_1}{g^3(x_1)}\bigg).
\end{array}
\end{equation}

If $\begin{array}{l} \int\limits_{R_0}^{+\infty}\dfrac{dx_1}{g^3(x_1)}<+\infty,\end{array}$the right hand side of the above inequality is bounded by a constant uniformly independent of $l.$ Extending the solutions ${\bf v}^{(l)}$ by $0$ into $\Omega\setminus\Omega_l$ we get functions $\tilde{{\bf v}}^{(l)}\in H_S(\Omega).$
The sequence $\{\tilde{{\bf v}}^{(l)} \}$ is bounded in the space $H_S(\Omega).$ Therefore, there exists a subsequence $\{\tilde{{\bf v}}^{(l_m)}\}$ which converges weakly in $H_S(\Omega)$ and strongly\footnote{Notice that the embedding $H_S(\Omega_l)\!\hookrightarrow\! L^4_S(\Omega_l)$ is compact.} in $L^4(\Omega_l)$ for any $l.$ Taking in integral identity \eqref{iv}
an arbitrary test function $\bfeta$ with a compact support, we can find a number $l$ such that $\supp\bfeta\subset\Omega_l$ and $\bfeta\in
H_S(\Omega_l).$ We can  pass in \eqref{iv} to a limit as $l_m\to
+\infty$. As a result we get for the limit vector function ${\bf v}$ the
integral identity \eqref{ws3}. Obviously, following estimate
\begin{equation*}\label{te1}\begin{array}{l}
\int\limits_{\Omega}|\nabla\textbf{v}|^2\,dx
\leq c({\bf a}, \|{\bf f}\|_*)\,\bigg(1+\int\limits_{R_0}^{+\infty}\dfrac{dx_1}{g^3(x_1)}\bigg)\end{array}\end{equation*}
holds.

However, if  $\begin{array}{l} \int\limits_{R_0}^{+\infty}\dfrac{dx_1}{g^3(x_1)}=+\infty\end{array},$ we cannot pass to a limit because the right hand side of \eqref{XX3} is growing. Therefore, we have to control the Dirichlet integral of the vector field ${\bf v}^{(l)}$ over subdomains $\Omega_k\subset\Omega_l$, for $ k\leq l.$ To do this we apply the special techniques (so called estimates of Saint Venant type) developed by V.A. Solonnikov and O.A. Ladyzhenskaya (see \cite{LadSol3}, \cite{Sol3}). Let us estimate the norm $\|\nabla{\bf v}^{(l)}\|_{L^2(\Omega_k)}$ with $k<l.$
We introduce the function
\begin{equation}\label{p225}
\begin{array}{l}
\textbf{U}_k^{(l)}(x)=\begin{cases}
\textbf{v}^{(l)}(x), \ \ x\in\Omega_k,\\
\theta_k (x)\textbf{v}^{(l)}(x)+\widehat{\textbf{v}}^{(l)}_k (x), \ \ x\in\omega_{k} ,\\
0, \ \ x\in\Omega\setminus\Omega_{k+1},
\end{cases}
\end{array}
\end{equation}
where $\theta_k(x)$ is a smooth even in $x_2$ cut-off function with the following properties:
$$\theta_k (x)=\begin{cases}
1, \ \ x\in\Omega_{k},\\
0, \ \ x\in\Omega\setminus\Omega_{k+1},
\end{cases}$$
\begin{equation}\label{teta}|\nabla\theta_k(x)|\leq \dfrac{c}{g (R_k)}.\end{equation}
Let $\widehat{\textbf{v}}^{(l)}_k$ be a solution of the problem
\begin{equation}\label{p2525}
\begin{array}{rcl}
\div\,\widehat{\textbf{v}}^{(l)}_k & = & -\nabla\theta_k\cdot\textbf{v}^{(l)} \qquad \hbox{\rm in }\;\;\omega_{k},\\
\widehat{\textbf{v}}^{(l)}_k &  = & 0  \qquad \hbox{\rm on }\;\;\partial\omega_{k}.
 \end{array}
 \end{equation} 
Since 
$$\begin{array}{l}
\int\limits_{\omega_{k}}\nabla\theta_k\cdot\textbf{v}^{(l)}\,dx=\int\limits_{\omega_{k}}\div\,(\theta_k \textbf{v}^{(l)})\,dx=\int\limits_{\partial\omega_{k}}\theta_k \textbf{v}^{(l)}\cdot\textbf{n}\,dx=\int\limits_{\sigma(R_k)}\textbf{v}^{(l)}\cdot\textbf{n}\,dx=0,\end{array}$$
a solution $\widehat{\textbf{v}}^{(l)}_k$ of problem \eqref{p2525} exists and satisfies the estimate

\begin{equation}\label{p25}
\begin{array}{rcl}
\|\nabla\widehat{\textbf{v}}^{(l)}_k\|_{L^2(\omega_{k})} &  \leq & c \|\nabla\theta_k\cdot\textbf{v}^{(l)}\|_{L^2(\omega_{k})},
 \end{array}
 \end{equation}
where $c$ is independent of $k$ (see Lemma\,\ref{dulgg}).
Using the estimate \eqref{teta} and the Poincar\'{e} inequality \eqref{poincareg}, from \eqref{p25} we derive the estimate
\begin{equation}\label{p26}
\begin{array}{l}
\|\nabla\widehat{\textbf{v}}^{(l)}_k\|_{L^2(\omega_{k})}\!\leq \!c \|\nabla\theta_k\cdot\textbf{v}^{(l)}\|_{L^2(\omega_{k})}\!\leq\!\dfrac{c}{g (R_k)} \|\textbf{v}^{(l)}\|_{L^2(\omega_{k})}\!\leq\!c\|\nabla\textbf{v}^{(l)}\|_{L^2(\omega_{k})}.
\end{array}
\end{equation} 
Notice that $\widehat{\textbf{v}}^{(l)}_k$ is not necessary symmetric, so we symmetrized it as in \eqref{sym}. For simplicity we do not change the notation of $\widehat{\textbf{v}}^{(l)}_k,$ i.e. $\widehat{\textbf{v}}^{(l)}_k$ is symmetric in the following text.

Set $\bfeta=\textbf{U}_k^{(l)}$ in \eqref{iv}. Then, because ${\bf U}_k^{(l)}=0$ in $\Omega\setminus\Omega_{k+1}$ and
\begin{equation*}\begin{array}{l}
\int\limits_{\Omega_{k+1}}((\textbf{v}^{(l)}+\textbf{A})
\cdot\nabla)\textbf{U}_k^{(l)}\cdot\textbf{U}_k^{(l)}\,dx=0,
\end{array}\end{equation*}
we obtain
\begin{equation}\label{p2526}\begin{array}{l}
\nu\int\limits_{\Omega_{k}}|\nabla \textbf{v}^{(l)}|^2\,dx=\int\limits_{\omega_k}((\textbf{v}^{(l)}+\textbf{A})\cdot\nabla)\,\textbf{U}_k^{(l)}\cdot(\textbf{v}^{(l)}
-\textbf{U}_k^{(l)})\,dx\\
\\
-\nu\int\limits_{\omega_k}\nabla \textbf{v}^{(l)}:\nabla\textbf{U}_k^{(l)}\,dx
+\int\limits_{\Omega_{k+1}} (\textbf{v}^{(l)}\cdot\nabla)\,\textbf{U}_k^{(l)}\cdot\textbf{A}\,dx\\
\\
-\nu\int\limits_{\Omega_{k+1}}\nabla\textbf{A}:\nabla\textbf{U}_k^{(l)}\,dx
+\int\limits_{\Omega_{k+1}} (\textbf{A}\cdot\nabla)\,\textbf{U}_k^{(l)}\cdot\textbf{A}\,dx+\int\limits_{\Omega_{k+1}}{\bf f}\cdot\textbf{U}^{(l)}_k\,dx.
\end{array}\end{equation}
\\In order to estimate the right hand side of \eqref{p2526}, we use first the inequalities \eqref{p26}, \eqref{lad1g} and the Poincar\'{e} inequality \eqref{poincareg} to obtain
\begin{equation}\label{extra}\begin{array}{rcl}\|\textbf{v}^{(l)}\|_{L^4(\omega_k)} &\leq & c\, g^{1/2}(R_k) \|\nabla\textbf{v}^{(l)}\|_{L^2(\omega_k)};\\
\\
\|\textbf{v}^{(l)}\!\!-\!\!\textbf{U}^{(l)}_k\|_{L^4(\omega_k)}&\leq &\|\textbf{v}^{(l)}\|_{L^4(\omega_k)}+\|\widehat{\textbf{v}}^{(l)}_k\|_{L^4(\omega_k)}\\
\\
&\leq & c\, g^{1/2}(R_k) \|\nabla\textbf{v}^{(l)}\|_{L^2(\omega_k)}+ c\,g^{1/2}(R_k)\|\nabla\widehat{\textbf{v}}_k^{(l)}\|_{L^2(\omega_k)}\\
\\
&\leq & c\, g^{1/2}(R_k) \|\nabla\textbf{v}^{(l)}\|_{L^2(\omega_k)};\\
\\
\|\nabla\textbf{U}^{(l)}_k\|_{L^2(\omega_k)}&\leq &\|\nabla(\theta_k\textbf{v}^{(l)})\|_{L^2(\omega_k)}+\|\nabla\widehat{\textbf{v}}^{(l)}_k\|_{L^2(\omega_k)}\\
\\
&\leq &\|\nabla\theta_k\|_{L^{\infty}(\omega_k)}\,\|\textbf{v}^{(l)}\|_{L^2(\omega_k)}\!+\!\|\theta_k\|_{L^{\infty}(\omega_k)}\,\|\nabla\textbf{v}^{(l)}\|_{L^2(\omega_k)}\!\!\\
\\
& + & c\, \|\nabla\textbf{v}^{(l)}\|_{L^2(\omega_k)}
\leq c\, \|\nabla\textbf{v}^{(l)}\|_{L^2(\omega_k)}.\end{array}\end{equation}
Below we will need the following inequality
\begin{equation}\label{LF}\begin{array}{l}
\int\limits_{\omega_k}|{\bf A}|^2|{\bf w}|^2dx\leq c\,\varepsilon^2\int\limits_{\omega_k}|\nabla {\bf w}|^2dx \ \ \ \ \forall {\bf w}\in W^{1,2}_{loc}(\overline{\Omega}), \ {\bf w}=0 \ {\rm on} \ \partial\Omega,
\end{array}\end{equation}
which can be proved arguing like for proving Leray-Hopf's inequality.
\\By using the H{\"o}lder inequality, \eqref{extra} and \eqref{LF} we obtain
\begin{equation}\label{1est}\begin{array}{l}
\bigg|\int\limits_{\omega_k}((\textbf{v}^{(l)}+\textbf{A})
\cdot\nabla)\textbf{U}_k^{(l)}\cdot(\textbf{v}^{(l)}-\textbf{U}_k^{(l)})\,dx \bigg|\\
\\
\leq \Big(\|\textbf{v}^{(l)}\|_{L^4(\omega_k)}
\|\textbf{v}^{(l)}-\textbf{U}^{(l)}_k\|_{L^4(\omega_k)}
+ \|\textbf{A}\,(\textbf{v}^{(l)}-\textbf{U}^{(l)}_k)\|_{L^2(\omega_k)}\Big) \|\nabla\textbf{U}^{(l)}_k\|_{L^2(\omega_k)}
\\
\\
\leq c\,g(R_k)\,\|\nabla {\bf v}^{(l)} \|^3_{L^2(\omega_k)}
+c\,\varepsilon\,\|\nabla {\bf v}^{(l)} \|_{L^2(\omega_k)}\,\|\nabla( {\bf v}^{(l)}-{\bf U}^{(l)}_k )\|_{L^2(\omega_k)}\\
\\
\leq c\,g(R_k)\,\|\nabla {\bf v}^{(l)} \|^3_{L^2(\omega_k)}+c\,\varepsilon\,\|\nabla {\bf v}^{(l)} \|^2_{L^2(\omega_k)}.
\end{array}\end{equation}
We estimate the second term of the equation \eqref{p2526} by using the Cauchy-Schwarz inequality and the estimates \eqref{extra}:
\begin{equation}\label{2est}\begin{array}{l}
\nu\bigg|\int\limits_{\omega_k}\nabla \textbf{v}^{(l)}:\nabla\textbf{U}_k^{(l)}\,dx \bigg|\leq \nu\|\nabla\textbf{v}^{(l)}\|_{L^2(\omega_k)}\, \|\nabla\textbf{U}^{(l)}_k\|_{L^2(\omega_k)}\leq \nu c \|\nabla\textbf{v}^{(l)}\|^2_{L^2(\omega_k)}.
\end{array}\end{equation}
To estimate the third term of \eqref{p2526} we use  the Leray-Hopf inequality \eqref{A}, the H{\"o}lder inequality, \eqref{extra} and \eqref{LF}:
\begin{equation}\label{3est}\begin{array}{l}
\bigg| \int\limits_{\Omega_{k+1}} (\textbf{v}^{(l)}\cdot\nabla)\textbf{U}^{(l)}_k \cdot \textbf{A}\, dx  \bigg|
\\
\\
\leq \bigg| \int\limits_{\Omega_{k}} (\textbf{v}^{(l)}\cdot\nabla)\textbf{v}^{(l)} \cdot \textbf{A}\, dx  \bigg|+\bigg| \int\limits_{\omega_{k}} (\textbf{v}^{(l)}\cdot\nabla)\textbf{U}^{(l)}_k \cdot \textbf{A}\, dx  \bigg|
\\
\\
\leq c\,\varepsilon\,\|\nabla\textbf{v}^{(l)}\|^2_{L^2(\Omega_{k})}+ \|\nabla\textbf{U}^{(l)}_k\|_{L^2(\omega_{k})}\, \bigg(\int\limits_{\omega_{k}}|\textbf{v}^{(l)}|^2 |\textbf{A}|^2\,dx\bigg)^{1/2}\\
\\
\leq c\,\varepsilon\,\bigg( \|\nabla\textbf{v}^{(l)}\|^2_{L^2(\Omega_{k})}+
\|\nabla\textbf{v}^{(l)}\|^2_{L^2(\omega_{k})} \bigg).
\end{array}\end{equation}
The last three terms of \eqref{p2526} are estimated by using the H{\"o}lder inequality, the Cauchy inequality, \eqref{123}, \eqref{1234} and \eqref{extra}:

\begin{equation}\label{4est}\begin{array}{l}
\nu\bigg|\int\limits_{\Omega_{k+1}}\nabla\textbf{A}:\nabla\textbf{U}^{(l)}_k\,dx\bigg|+\bigg|\int\limits_{\Omega_{k+1}} (\textbf{A}\cdot\nabla)\textbf{U}^{(l)}_k\cdot\textbf{A}\,dx\bigg|+
\bigg|\int\limits_{\Omega_{k+1}}{\bf f}\cdot {\textbf{U}^{(l)}_k\,dx  \bigg|}
\\
\\
\leq c\bigg( \|\nabla\textbf{A}\|_{L^2(\Omega_{k+1})}+ \|\textbf{A}\|^2_{L^4(\Omega_{k+1})} +\|{\bf f}\|_{H^{*}(\Omega_{k+1})}\bigg)\|\nabla\textbf{U}^{(l)}_k\|_{L^2(\Omega_{k+1})}\\
\\
\leq\dfrac{c}{2\,\mu}\bigg(\|\nabla\textbf{A}\|_{L^2(\Omega_{k+1})}\!+\!\|\textbf{A}\|^2_{L^4(\Omega_{k+1})}\!+\!\|{\bf f}\|_{H^{*}(\Omega_{k+1})}\bigg)^2\!+\!\dfrac{c\,\mu}{2}\|\nabla\textbf{U}^{(l)}_k\|^2_{L^2(\Omega_{k+1})}\\
\\
\!\leq \dfrac{2\,c}{\mu}\,\bigg(\|\nabla\textbf{A}\|^2_{L^2(\Omega_{k+1})}+ \|\textbf{A}\|^4_{L^4(\Omega_{k+1})}+\|{\bf f}\|^2_{H^{*}(\Omega_{k+1})}\bigg)+\dfrac{c\,\mu}{2}\|\nabla\textbf{v}^{(l)}\|^2_{L^2(\Omega_{k+1})}\\
\\
\leq \dfrac{c({\bf a}, \|{\bf f}\|_*)}{\mu}\,\bigg( 1+\int\limits_{R_0}^{R_{k+1}}\dfrac{dx_1}{g^3 (x_1)} \bigg)+c\,\dfrac{\mu}{2}\,\bigg(\|\nabla\textbf{v}^{(l)}\|^2_{L^2(\Omega_{k})}+
\|\nabla\textbf{v}^{(l)}\|^2_{L^2(\omega_{k})}\bigg).
\end{array}\end{equation}
\\Therefore, from \eqref{p2526}, \eqref{1est}, \eqref{3est}, \eqref{4est} it follows that
\begin{equation*}\begin{array}{l}
\nu\,\int\limits_{\Omega_{k}}|\nabla \textbf{v}^{(l)}|^2\,dx\leq c \,g(R_k)\|\nabla\textbf{v}^{(l)}\|^3_{L^2(\omega_k)}+c\,\varepsilon \,\|\nabla\textbf{v}^{(l)}\|^2_{L^2(\omega_k)}
+ c\,\nu\,\|\nabla\textbf{v}^{(l)}\|^2_{L^2(\omega_k)}\\
\\
+c\,(\varepsilon+\dfrac{\mu}{2})\,\bigg( \|\nabla\textbf{v}^{(l)}\|^2_{L^2(\Omega_{k})}+
\|\nabla\textbf{v}^{(l)}\|^2_{L^2(\omega_{k})} \bigg)
+\dfrac{c({\bf a}, \|{\bf f}\|_*)}{\mu}\,\bigg( 1+\int\limits_{R_0}^{R_{k+1}}\dfrac{dx_1}{g^3 (x_1)} \bigg).
\end{array}\end{equation*}
For $\varepsilon$ and $\mu$ sufficiently small, we obtain
\begin{equation}\label{lastt}\begin{array}{l}
\int\limits_{\Omega_{k}}|\nabla \textbf{v}^{(l)}|^2\,dx \leq  c \, g(R_k)\|\nabla\textbf{v}^{(l)}\|^3_{L^2(\omega_k)}+ c \|\nabla\textbf{v}^{(l)}\|^2_{L^2(\omega_k)}\\
\\+ c({\bf a}, \|{\bf f}\|_*)\,\bigg( 1+\int\limits_{R_0}^{R_{k+1}}\dfrac{dx_1}{g^3 (x_1)} \bigg).
\end{array}\end{equation}
Using the remark\,\ref{remark1} several times we derive
\begin{equation*}\begin{array}{l}
\int\limits_{R_k}^{R_{k+1}}\dfrac{dx_1}{g^3(x_1)}\leq 
\int\limits_{R_k}^{R_{k+1}}\dfrac{dx_1}{\big(\frac{1}{2}\,g(R_k)\big)^3}=\dfrac{8\,(R_{k+1}-R_k)}{g^3(R_k)}=\dfrac{4}{L\,g^2(R_k)},
\end{array}\end{equation*}
\begin{equation*}\begin{array}{l}
\int\limits_{R_{k-1}}^{R_k}\dfrac{dx_1}{g^3(x_1)}\geq 
\int\limits_{R_{k-1}}^{R_k}\dfrac{dx_1}{\big(\frac{3}{2}\,g(R_{k-1})\big)^3}=\dfrac{8\,(R_{k}-R_{k-1})}{27\,g^3(R_{k-1})}=\dfrac{4}{27\,L\,g^2(R_{k-1})}.
\end{array}\end{equation*}
Since $g(R_k)\geq \dfrac{1}{2}g(R_{k-1})$ we get
\begin{equation*}\begin{array}{l}
\int\limits_{R_{k-1}}^{R_k}\dfrac{dx_1}{g^3(x_1)}\geq 
\dfrac{1}{27\,L\,g^2(R_k)}.
\end{array}\end{equation*}
It follows that
\begin{equation*}\begin{array}{l}
\int\limits_{R_k}^{R_{k+1}}\dfrac{dx_1}{g^3(x_1)}\leq \dfrac{4}{L\,g^2(R_k)}=27\cdot 4\dfrac{1}{27\,L\,g^2(R_k)}\leq 
27\cdot 4\,\int\limits_{R_{k-1}}^{R_{k}}\dfrac{dx_1}{g^3(x_1)}.
\end{array}\end{equation*}
Thus we have
\begin{equation*}\begin{array}{l}
\int\limits_{R_0}^{R_{k+1}}\dfrac{dx_1}{g^3(x_1)}\leq 
\int\limits_{R_0}^{R_{k}}\dfrac{dx_1}{g^3(x_1)}+
\int\limits_{R_k}^{R_{k+1}}\dfrac{dx_1}{g^3(x_1)}\leq 109\,\int\limits_{R_0}^{R_{k}}\dfrac{dx_1}{g^3(x_1)}
\end{array}\end{equation*}
and the inequality \eqref{lastt} becomes
\begin{equation*}\begin{array}{l}
\int\limits_{\Omega_{k}}|\nabla \textbf{v}^{(l)}|^2\,dx \leq  c \, g(R_k)\|\nabla\textbf{v}^{(l)}\|^3_{L^2(\omega_k)}+ c \|\nabla\textbf{v}^{(l)}\|^2_{L^2(\omega_k)}+ c({\bf a}, \|{\bf f}\|_*)\,\bigg( 1+\int\limits_{R_0}^{R_{k}}\dfrac{dx_1}{g^3 (x_1)} \bigg).
\end{array}\end{equation*}
\\Denote $y_k=\int\limits_{\Omega_{k}}|\nabla \textbf{v}^{(l)}|^2\,dx.$ Since $\int\limits_{\omega_k}=\int\limits_{\Omega_{k+1}}-\int\limits_{\Omega_{k}}$, we can rewrite the last inequality as
\begin{equation}\label{XX4}\begin{array}{l}
y_k\leq c_*(y_{k+1}-y_k)+c_{**}g(R_k)(y_{k+1}-y_k)^{3/2}+\dfrac{1} {2}Q_k,
\end{array}\end{equation}
where
\begin{equation}\label{q}\begin{array}{l}
Q_k=2\,c({\bf a}, \|{\bf f}\|_*)\, \bigg( 1+\int\limits_{R_0}^{R_k}\dfrac{dx_1}{g^3 (x_1)} \bigg).
\end{array}\end{equation}
We have, using  the remark\,\ref{remark1} again

\begin{equation*}\begin{array}{l}
c_*(Q_{k+1}-Q_k)+c_{**}g(R_k)(Q_{k+1}-Q_k)^{3/2}\\
\\
\leq 2\,c_*\,c({\bf a}, \|{\bf f}\|_*)\int\limits_{R_k}^{R_{k+1}}\dfrac{dx_1}{g^3 (x_1)}
+c_{**}\,g(R_k)\bigg( c({\bf a}, \|{\bf f}\|_*)\int\limits_{R_k}^{R_{k+1}}\dfrac{dx_1}{g^3 (x_1)} \bigg)^{3/2}
\end{array}\end{equation*}
\begin{equation*}\begin{array}{l}
\leq c_1\,c({\bf a}, \|{\bf f}\|_*)\,g^{-2}(R_k)
\leq c_2\,c({\bf a}, \|{\bf f}\|_*) \,\int\limits_{R_{k-1}}^{R_{k}}\dfrac{dx_1}{g^3 (x_1)}

\leq c({\bf a}, \|{\bf f}\|_*) \,\Big(1+\int\limits_{R_{0}}^{R_{k}}\dfrac{dx_1}{g^3 (x_1)}\Big)=\dfrac{1}{2}Q_k
\end{array}\end{equation*}
for $k$ large enough if 
$\dfrac{\int\limits_{R_{k-1}}^{R_{k}}\dfrac{dx_1}{g^3 (x_1)}}{1+\int\limits_{R_{0}}^{R_{k}}\dfrac{dx_1}{g^3 (x_1)}}\rightarrow 0$
when $k\rightarrow +\infty.$

{\bf Claim.} {\it Let non negative numbers $y_k, \ k=1,...,N,$ satisfy the inequalities $y_{k+1}\geq y_k$ and
\begin{equation}\label{XX1}\begin{array}{l}
y_k\leq c_*(y_{k+1}-y_k)+c_{**}\, g(R_k) (y_{k+1}-y_k)^{3/2}+\dfrac{1}{2}Q_k,
\end{array}\end{equation}
where $c_*, c_{**}, Q_k$ are non negative numbers such that
\begin{equation}\label{XX2}\begin{array}{l}
\dfrac{1}{2}Q_k\geq c_*(Q_{k+1}-Q_k)+c_{**}\, g(R_k) (Q_{k+1}-Q_k)^{3/2}.
\end{array}\end{equation}
If $N<+\infty$ and $y_N\leq Q_N$
then $y_k\leq Q_k \ \ \forall k<N$.}
\\

Although this claim is proved in \cite{Sol3}, for the reader convenience we give the proof which is based on induction.  Suppose we have proved that $y_{k+1}\leq Q_{k+1}.$ If $y_k>Q_k$ then $0\leq y_{k+1}-y_k<Q_{k+1}-Q_k.$
Since the function $\tau\rightarrow F(\tau)=c_*\tau+c_{**}\,g(R_k)\,\tau^{3/2}$ is increasing we get
$$\begin{array}{l} 
y_k\leq F(y_{k+1}-y_k)+\dfrac{1}{2}Q_k<F(Q_{k+1}-Q_k)+\dfrac{1}{2}Q_k\leq\dfrac{1}{2}Q_k+\dfrac{1}{2}Q_k=Q_k
\end{array}$$
and a contradiction. Thus, $y_k\leq Q_k.$

Since $Q_k$ satisfies the condition \eqref{XX2}, the inequality \eqref{XX4} together with \eqref{XX3} and the claim above, the estimate
\begin{equation}\label{final}\begin{array}{l}
y_k = \int\limits_{\Omega_k}|\nabla{\bf v}^{(l)}|^2\,dx
\leq c({\bf a}, \|{\bf f}\|_*)\,\bigg(1+\int\limits_{R_0}^{R_{k}}\dfrac{dx_1}{g^3 (x_1)}\bigg) \ \ \forall k\leq l
\end{array}\end{equation}
holds.

Since for every bounded domain $\Omega_k$,  $k>0$ the embedding $W_S^{1,2}(\Omega_k)\!\hookrightarrow\! L^4_S(\Omega_k)$ is compact, the estimate \eqref{final} is sufficient to assure the existence of a subsequence $\{\textbf{v}^{(l_m)}\}$ which converges weakly 
in $\mathring{W}^{1,2}_S (\Omega_k)$ and strongly in  $L^4_S(\Omega_k)$ for any $k>0.$  Such subsequence could be constructed by Cantor diagonal process: we can choose a weakly convergent subsequence $\{\textbf{v}^{(l_m)}\}$ in 
$\mathring{W}^{1,2}_S (\Omega_1)$ which converges strongly in $L^4_S(\Omega_1).$ In the same manner we subtract a subsequence of $\{\textbf{v}^{(l_m)}\}$ in $\Omega_2$ which we call also $\{\textbf{v}^{(l_m)}\}$ for the sake of simplicity. Continuing this we can choose a desired subsequence. Taking in integral identity \eqref{iv}
an arbitrary test function $\bfeta$ with a compact support, we can find a number $k$ such that $\supp\bfeta\subset\Omega_k$ and $\bfeta\in
H_S(\Omega_k).$ Extending $\bfeta$ by zero into $\Omega\setminus
\Omega_k$, and considering all integrals in \eqref{iv} as integrals
over $\Omega$, we can  pass in \eqref{iv} to a limit as $l_m\to
+\infty$. As a result we get for the limit vector function ${\bf v}$ the
integral identity \eqref{ws3}. 
Therefore, $\textbf{u}=\textbf{A}+\textbf{v}$ is a weak solution of problem \eqref{prad0}. The estimate \eqref{te} for $\textbf{v}$ follows from \eqref{final}. Since for $\textbf{A}$ the analogous to \eqref{te} is also valid, we obtain \eqref{te} for the sum $\textbf{u}=\textbf{A}+\textbf{v}$.

\begin{remark}
{\rm If the norms $\|{\bf a}\|_{W^{1/2,2}(\partial\Omega)}$ and $\|{\bf f} \|_*$ are sufficiently small, it can be proved using the methods proposed in \cite{LadSol3} and \cite{Sol3} that the weak solution ${\bf u}$ is unique in a class of functions with the Dirichlet integral growing ``not too fast".}
\end{remark}

\begin{remark}
{\rm If $D$ is a channel-like outlet and $|\mathbb{F}|$ is sufficiently small, it can be proved using the methods from \cite{LadSol3} and \cite{Sol3} that the weak solution ${\bf u}$ tends to the Poiseuille flow as $x_1\rightarrow +\infty.$ In this sense our result extends the result obtained by H. Morimoto and H. Fujita in \cite{Morimoto1}, \cite{Morimoto2}.}
\end{remark}

\section*{Acknowledgement}

The research of M. Chipot, K. Pileckas and W. Xue leading to these results has received funding from Lithuanian-Swiss cooperation programme to reduce economic and social disparities within the enlarged European Union under project agreement No. CH-3-SMM-01/01.
\\The research of M. Chipot and W. Xue was supported by the Swiss National Science Foundation under the grant \#200021-146620.
\\The research of K. Kaulakyt\.{e} was completed during a visit at the University of Zurich supported by Sciex - Scientific Exchange Programme NMS-CH.

 \end{document}